\newtheorem{theorem}{Theorem}[section]
\newtheorem{lemma}[theorem]{Lemma}
\newtheorem*{theorem*}{Theorem}
\theoremstyle{definition}
\newtheorem{definition}{Definition}[section]
\newtheorem{example}{Example}[section]
\theoremstyle{remark}
\newtheorem{remark}{Remark}[section]
\newcommand{\DER}{\vdash}
\newcommand{\ERGO}{\Rightarrow}
\newcommand{\seq}{\ERGO}
\newcommand{\IMPL}{\rightarrow}
\newcommand{\ET}{\wedge}
\newcommand{\VEL}{\vee}
\newcommand{\FAL}{\bot}
\newcommand{\VER}{\top}
\newcommand{\WKN}{(IW)}
\newcommand{\CTRCT}{(IC)}
\newcommand{\CUT}{(\textit{cut})}
\newcommand{\COM}{\textit{com}}
\newcommand{\LJ}{\mathrm{LJ}}
\newcommand{\HJ}{\mathrm{HLJ}}
\newcommand{\NJ}{\mathrm{NJ}}
\newcommand{\NJG}{\mathrm{NG}}
\newcommand{\GA}{\textit{GA}}
\newcommand{\NR}{\textit{Nr}}
\newcommand{\hh}{\, | \, }
\begin{document}

\title{Hypersequents and Systems of Rules: Embeddings and Applications\thanks{Supported by projects FWF START Y544-N23, WWTF MA16-028, FWF
W1255-N23, and H2020 MSCA RISE No. 689176.}}  
\date{}
\author{Agata Ciabattoni, TU Wien \\ Francesco A.\ Genco, TU Wien}


\maketitle

\section{Introduction}

The multitude and diversity of formalisms introduced to define analytic calculi 
has made it increasingly important to identify their
interrelationships and relative expressive power.  {\em Embeddings}
between formalisms, i.e.\ functions that take any calculus in some
formalism and yield a calculus for the same logic in another
formalism, are useful tools to prove that a formalism subsumes another
one in terms of expressiveness -- or, when bi-directional, that two
formalisms are equi-expressive. Such embeddings can also provide
useful reformulations of known calculi and allow the transfer of
certain proof-theoretic results, thus alleviating the need for
independent proofs in each system and avoiding duplicating work.
Various embeddings between formalisms have appeared in the literature, see,
 e.g.,~\cite{Revantha2016,Revantha,GorRam12AIML,Fit12,pog2010paper,pog2010book,Wan98} (and the bibliography thereof).

In this paper we introduce a bi-directional embedding between the
hypersequent formalism~\cite{Avron87} and a fragment of the system of
rules formalism~\cite{Negri:2014}.  Hypersequents are a well-studied
generalisation of sequents successfully employed to introduce analytic
proof systems for large classes of non-classical logics, see,
e.g.,~\cite{Avron:1991, Avr96, Ciabattoni:2008fk, Lahav2013,
lellmannTCS}.  Systems of rules have been recently introduced
in~\cite{Negri:2014} as a very expressive but complex formalism
capable, for example, of capturing all normal modal logics formalised
by Sahlqvist formulae.  A system of rules consists of different
(labelled) sequent rules connected by conditions on the order of their
applicability. Hence, derivations containing instances of such systems
are non-local objects, unlike hypersequent derivations.

Non-locality here has two different but closely related roles:
$(i)$ to avoid as much bureaucracy as possible in the representation
of proofs, and $(ii)$ to capture more logics.

Ad $(i)$: Natural deduction~\cite{gentzen1935} is a traditional
example of a formalism relying exclusively on formulae and non-local
effects, such as hypotheses discharge, to construct proofs. This is
particularly useful when investigating, e.g., the
computational content of proofs via a Curry--Howard
correspondence~\cite{Howard80}, but might complicate the search for and
manipulation of proofs.
Sequent~\cite{gentzen1935} and Hypersequent
calculi, by contrast, have been designed precisely with the aim to
avoid any form of non-locality.  
Locality guarantees indeed a tighter control over
proofs, thus making local proof-systems
easier to implement and to use for proving  properties of the formalised logics.
The price to pay is to deal with more complex basic objects, e.g., derivability assertions
(sequents) and their parallel composition (hypersequents). 

Ad $(ii)$: The role of non-locality to increase the expressive power of
formalisms is demonstrated in~\cite{Negri:2014}, where 
the use of systems of labelled rules allows the definition of modular analytic calculi for 
(modal and intermediate) logics whose frame conditions are beyond the  geometric fragment~\cite{sn2005}.

The system of rules formalism combines the bookkeeping
machinery of (labelled) sequent calculus with a generalised version
of the discharging mechanism of natural deduction. More precisely, a system
of rules is a set of rules that can only be applied in a
certain order and possibly share metavariables for formulae or sets of formulae.
 The word ``system'' is used in the same sense as in linear algebra, where there
are systems of equations with variables in common, and each equation
is meaningful and can be solved only if considered together with the
other equations of the system.
Consider for example the following system of sequent rules:
\[ \infer[(r)]{\Gamma \ERGO \Pi}{ \infer*{\Gamma \ERGO
\Pi}{\infer[(s)]{\Gamma_{1} \ERGO \Pi_{1}}{ \Sigma , \Gamma_{1} \ERGO
\Pi_{1}}} & \infer*{\Gamma \ERGO \Pi}{\infer[(t)]{\Sigma \ERGO }{}}}
\] Here $(s)$ and $(t)$ can only be applied above the premisses of the
rule $(r)$ and must share the metavariable  $\Sigma$. 
Hence the application of $(r)$ discharges the occurrences of
$(s)$ and $(t)$. 

The non-locality of systems of rules is twofold:
it is horizontal, because of the dependency between rules occurring in
disjoint branches; and vertical, because of rules that can only 
be applied above other rules.

A possible connection between hypersequents and systems of rules is
hinted in~\cite{Negri:2014}.  Following~\cite{CG2016} this paper
formalises and proves this intuition.  Focusing on propositional
logics intermediate
between intuitionistic and classical logic, we define a bi-directional
{\em embedding} between hypersequents and a subclass of systems of
sequent rules ({\em 2-systems}) in which the vertical non-locality is
restricted to at most two (non labelled) sequent rules.  Our
embeddings show that these two seemingly different extensions of the
sequent calculus have the same expressive power, arising from
non-local conditions for 2-systems and from bookkeeping mechanisms for
hypersequents.

From the embedding into hypersequents, 2-systems have the practical
gain of very general analyticity results. Recall indeed that
analyticity (i.e.\ the subformula property) is shown
in~\cite{Negri:2014} for systems of rules sharing only variables or atomic
formulae; while this restriction does not yield any loss of generality
in the context of labelled sequents, it does for systems of rules
operating on non-labelled sequents, e.g., defined with the aim of
directly capturing Hilbert axioms~\cite{Ciabattoni:2008fk}. Moreover
the embedding enables the introduction of new cut-free 2-systems.

The bonds unveiled by the embeddings between hypersequents and
2-systems extend further, leading to a rewriting of the former as
natural deduction systems.  As observed, e.g.\ in
\cite{Beckmann&Preining:2015}, this rewriting is a crucial step to
formalise and prove the intuition in~\cite{Avron:1991} that the
intermediate logics possessing analytic hypersequent calculi might
give rise to correponding parallel $\lambda$-calculi.  The close
relation beween systems of rules and natural deduction enables us to
define simple and modular natural deduction calculi for a large class
of intermediate logics. The calculi are obtained by extending Gentzen natural deduction calculus $\NJ$
by new rules. Similarly to sequent rules belonging to
systems, these rules can discharge other rule applications, i.e.\ they are
\emph{higher-level rules} (see e.g.,~\cite{schroederh2014}).
The results in~\cite{lics2017} for the natural deduction calculus in Example~\ref{ex:lin_nd} for G\"odel logic -- one of the best known intermediate logics  -- demonstrate the usefulness of our approach for Curry--Howard correspondences. 

\medskip

\noindent The article is structured as follows: 
Section~\ref{sec:pre} recalls the notions of hypersequent and
system of rule;
the translations between systems of rules and hypersequent rules are
presented in Section~\ref{sec:proc}; Section~\ref{sec:equivalence}
contains the embeddings between derivations:
Section~\ref{sec:sys_to_hyp} the direction from system of rules to
hypersequent derivations and Section~\ref{sec:hyp_to_sys} the inverse
direction. Sections~\ref{syst_norm_form} and~\ref{sec:prepro} introduce normal forms
for derivations containing systems of rules and hypersequents, respectively.
The final section describes the applications of the embedding,
which include the definition of new natural deduction calculi for a large
class of intermediate logics.

\medskip

\noindent The present paper extends~\cite{CG2016} in several ways: it
shows how to use the embedding to obtain natural deduction calculi,
it contains full proofs with improved techniques (e.g., the new
Section~\ref{syst_norm_form}) as
well as examples and explanations that were not included in the previous version.

\section{Preliminaries}
\label{sec:pre}
A \emph{hypersequent}~\cite{Avron87,Avron:1991} is a $\mid$-separated multiset of ordinary sequents, called
\emph{components}. 
The sequents we consider in this paper have the form $\Gamma \ERGO
\Pi$ where $\Gamma$ is a (possibly empty) multiset of formulae in the
language of intuitionistic logic and $\Pi$ contains at most one
formula.

\emph{Notation}.  Unless stated otherwise we use upper-case Greek
letters for multisets of formulae (where $\Pi$ contains at most one
element), lower-case Greek letters for formulae, and $G, H$ for
(possibly empty) hypersequents.

As with sequent calculi, the inference rules of hypersequent calculi
consist of initial hypersequents (i.e., axioms), the cut-rule as well
as logical and structural rules.  The logical and structural rules are
divided into \emph{internal} and \emph{external rules}. The internal
rules deal with formulae within one component of the conclusion.
Examples of external structural rules include external weakening $(EW)$
and external contraction $(EC)$, see Fig.~1.

Rules are usually presented as rule schemata. Concrete instances of a
rule are obtained by substituting formulae for schematic
variables. Following standard practice, we do not explicitly
distinguish between a rule and a rule schema.

Fig.~1 displays the hypersequent version  $\HJ$ of the propositional 
sequent calculus $\LJ$ for intuitionistic logic.
\begin{figure}[h]\centering
\hrule \medskip
\begin{tabular}{c}
      $\varphi \ERGO \varphi \quad  \FAL \ERGO \Pi \quad
       \; \, \vcenter{\infer[(\VEL l)]{ G \hh \Gamma , \varphi \VEL \psi \ERGO   \Pi }{ G
      \hh \Gamma ,  \varphi \ERGO \Pi \quad  G \hh \Gamma ,  \psi \ERGO
      \Pi }} \quad \vcenter{\infer[(\VEL r)]{ G \hh \Gamma \ERGO   \varphi _{1} \VEL \varphi _{2}
      }{ G \hh \Gamma  \ERGO \varphi _{i}}}$ \medskip \\ 
      \infer[( \ET l)]{ G \hh \Gamma ,  \varphi \ET \psi \ERGO \Pi }{
      G \hh \Gamma ,  \varphi, \psi \ERGO \Pi} \; \,
      \infer[(\ET r)]{ G \hh \Gamma  \ERGO \varphi \ET \psi }{ G \hh
      \Gamma \ERGO \varphi  \quad  G \hh \Gamma \ERGO \psi} \; \,     
      \infer[\WKN]{ G \hh  \varphi ,\Gamma \ERGO \Pi }{ G \hh \Gamma
      \ERGO \Pi} \medskip \\
      \infer[(\IMPL l)]{ G \hh \Gamma , \varphi \IMPL \psi
      \ERGO  \Pi }{ G \hh \Gamma \ERGO \varphi \quad  G \hh \Gamma ,
      \psi \ERGO \Pi } \; \,
      \infer[(\IMPL r)]{ G \hh \Gamma  \ERGO \varphi \IMPL \psi  }{ G \hh
      \Gamma ,  \varphi  \ERGO \psi } \; \,
      \infer[\CTRCT]{ G \hh \varphi  , \Gamma \ERGO \Pi}{ G \hh
      \varphi , \varphi ,\Gamma  \ERGO \Pi} \medskip \\
      \infer[\!\CUT]{ G \hh \Gamma , \Gamma ' \ERGO \Pi}{ G \hh
      \Gamma \ERGO \varphi \quad  G \hh  \varphi , \Gamma '  \ERGO
      \Pi } \,
      \infer[\!(EW)]{ G \hh \Gamma  \ERGO \Pi }{G} \, 
\infer[\!(EC)]{ G \hh \Gamma  \ERGO \Pi }{G \hh \Gamma  \ERGO \Pi \hh \Gamma  \ERGO \Pi}    
    \end{tabular}
\smallskip
\hrule
\label{fig:rulesHI}\caption{Rules and axioms of $\HJ$.}
\end{figure}
Note that the \emph{hyperlevel} of $\HJ$ is in fact redundant since a
hypersequent $\Gamma_1 \seq \Pi_1 \hh \dots \hh \Gamma_k \seq \Pi_k$ is derivable in $\HJ$ if and only if
$\Gamma_i \seq \Pi_i$ is derivable in $\LJ$ for some $i \in \{1, \dots ,k \}$.
Indeed, any sequent calculus can be trivially viewed as a hypersequent calculus.
The added expressive power of the latter is due to the possibility of defining new rules which act
simultaneously on several components of one or more hypersequents.
\begin{example}
\label{ex:com}
By adding to $\HJ$ the following version of the  structural rule introduced in~\cite{Avron:1991}
  \[\infer[(\COM )]{G \hh \Psi , \Gamma_{1} \ERGO \Pi _{1} \hh \Phi ,
      \Gamma _{2} \ERGO \Pi _{2} }{G \hh \Phi , \Gamma_{1} \ERGO \Pi
      _{1} & G \hh \Psi , \Gamma _{2} \ERGO \Pi _{2} }
  \]
we obtain a cut-free calculus for G\"odel logic, which is (axiomatised by) intuitionistic logic plus
the linearity axiom $(\varphi \IMPL \psi ) \VEL ( \psi \IMPL \varphi)$.

In~\cite{Avron:1991} Avron suggested that a hypersequent can be thought of as
a multiprocess. Under this interpretation, $(\COM)$ is intended to
model the exchange of information between parallel processes.

\end{example}
As the usual interpretation of the symbol ``$\hh$'' is disjunctive,
the hypersequent calculus can naturally capture 
properties (Hilbert axioms, algebraic equations\dots) that can be
expressed in a disjunctive form, see \cite{Ciabattoni:2008fk}. 

\medskip

\noindent \emph{Notation and Assumptions}.
Given a hypersequent rule $(r)$ with premisses $G \hh H_{1} \;
\dots \; G \hh H_{n}$ and conclusion $G \hh H$, we call \emph{active}
the components in the hypersequents $H_{1} , \dots ,
H_{n}, H$. We call \emph{context components} the components of $G$.
In this paper we will only consider hypersequent rules that $(i)$ are
(external) context sharing, i.e., whose premisses all contain the same
hypersequent context $G$, 
and $(ii)$ (except for $(EC)$) they have one
active component in each premiss, i.e., in which each $H_i$ is a
sequent. Note that $(i)$ is not a restriction and, in
absence of eigenvariables acting on more than one component, neither is $(ii)$;
indeed, using $(EC)$ and $(EW)$, we can always transform a rule into
an equivalent one that satisfies these conditions,
that are crucial to prove Lemma~\ref{lem:EC}.

\medskip

Systems of rules were introduced in~\cite{Negri:2014} to define
analytic labelled calculi for logics semantically characterised by
generalised geometric implications, a class of first-order formulae
that goes beyond the geometric fragment~\cite{sn2005} and includes
all frame properties that correspond to formulae in the Sahlqvist fragment.

In general, a \emph{system of rules} is a set of (possibly labelled)
sequent rules that are bound to be applied in a predetermined order
and that may share (schematic) variables or labels.  Analyticity of
systems of rules when added to a sequent or labelled sequent calculus
for classical or intuitionistic logic was proved in~\cite{Negri:2014}
for systems acting on atomic formulae or relational atoms.

The proper restriction of systems of rules that we consider in the
paper is defined below.

\begin{definition} 
\label{def:2systems}
A \emph{two-level system of rules} (\emph{2-system} for short) is a
set of sequent rules\\$\lbrace (r_{1}) , \dots , (r_{k}) , (r_{B})\rbrace
$ that can only be applied according to the following schema:
\[ \infer[(r_{B})] {\Gamma \ERGO \Pi} { \infer*{\Gamma \ERGO
\Pi}{{\mathcal D}_{1}} & \dots & \infer*{\Gamma \ERGO \Pi}{{\mathcal D}_{k}} }
\]
where each derivation ${\mathcal D}_{i}$, for $1 \leq i
\leq k$, may contain several applications of
\[
\infer[(r_i)]{ \Sigma_{0} , \Gamma ' \ERGO \Pi '}{\Sigma _{1} , \Gamma ' \ERGO \Pi ' & \dots &
  \Sigma _{n} , \Gamma ' \ERGO \Pi '}
\] 
that act on the same multisets of formulae
$\Sigma_{0}, \Sigma_{1} , \dots , \Sigma_{n}$.

The rule $(r_{B})$ is called \emph{bottom
rule}, while $(r_{1}) , \dots , (r_{k})$  \emph{top rules}.
\end{definition}
In this paper we will consider 2-systems that manipulate $\LJ$
sequents.

Given a calculus ${\mathcal C}$ and a set of rules $\mathbb{R}$, ${\mathcal C}
+ \mathbb{R}$ will denote the calculus obtained by adding the elements
of $\mathbb{R}$ to ${\mathcal C}$, and $\DER_{{\mathcal C} + \mathbb{R}}$ its
derivability relation.

\begin{example}\label{ex:sys_com} 
  The 2-system $\textit{Sys}_{(\COM ^{*})}$ in \cite{Negri:2014} for the
  linearity axiom (cf.\ Example~\ref{ex:com}) is the following ($ \varphi$ and $\psi$ are metavariables for formulae):
\[ \infer[(\COM ^{*}_{B})]{\Gamma \ERGO \Pi}{\infer*{\Gamma \ERGO
        \Pi}{\infer[(\COM ^{*} _1)]{\psi, \Gamma_{1} \ERGO \Pi
          _{1}}{ \varphi, \psi, \Gamma _{1} \ERGO \Pi _{1}}} &
      \infer*{\Gamma \ERGO \Pi}{\infer[(\COM ^{*}_2)]{\varphi , \Gamma
          _{2} \ERGO \Pi _{2}}{\varphi, \psi, \Gamma _{2} \ERGO \Pi _{2}}}}
  \] 
The analyticity of $\LJ + \textit{Sys}_{(\COM ^{*})}$ is shown in \cite{Negri:2014} for 
{\em atomic} $\varphi$ and $\psi$.
\end{example}

\begin{remark}
The above definition of 2-system differs from the one
  in~\cite{CG2016} where each rule $(r_{i})$ could only be
  applied once in $\mathcal{D}_{i}$. The following example motivates
  the adoption of the more general condition in Definition~\ref{def:2systems}.

\begin{example}
A cut-free derivation in $\LJ + \textit{Sys}_{(\COM ^{*})}$ (see Example~\ref{ex:sys_com})
of the formula  $ ((\varphi \IMPL \psi) \ET
(\varphi \IMPL \psi)) \VEL ((\psi \IMPL \varphi ) \ET (\psi \IMPL
\varphi )) $ requires two applications of
each of the top rules $(\COM ^{*} _{1})$ and $(\COM ^{*} _{2})$:
\begin{footnotesize}
  \[ \infer{\ERGO ((\varphi \IMPL \psi) \ET (\varphi \IMPL \psi)) \VEL
((\psi \IMPL \varphi ) \ET (\psi \IMPL \varphi )) } { \infer
{\ERGO((\varphi \IMPL \psi) \ET (\varphi \IMPL \psi)) \VEL ((\psi \IMPL
\varphi ) \ET (\psi \IMPL \varphi ))}{ \infer{\ERGO (\varphi \IMPL
\psi) \ET (\varphi \IMPL \psi)}{\infer{\ERGO \varphi \IMPL \psi
}{\infer[(\COM ^{*}_{1})]{ \varphi \ERGO \psi}{\infer{\varphi , \psi \ERGO
\psi}{\psi \ERGO \psi} }} & \infer{\ERGO \varphi \IMPL
\psi}{\infer[(\COM ^{*}_{1})]{ \varphi \ERGO \psi}{\infer{\varphi , \psi
\ERGO \psi}{\psi \ERGO \psi} }}}} & \infer {\ERGO ((\varphi \IMPL
\psi) \ET (\varphi \IMPL \psi)) \VEL ((\psi \IMPL \varphi ) \ET (\psi
\IMPL \varphi )) } { \infer{\ERGO (\psi \IMPL \varphi ) \ET (\psi
\IMPL \varphi )}{ \infer{\ERGO \psi \IMPL \varphi
}{\infer[(\COM ^{*}_{2})]{\psi \ERGO \varphi}{ \infer{\psi , \varphi \ERGO
\varphi}{\varphi \ERGO \varphi}}} & \infer{\ERGO \psi \IMPL
\varphi}{\infer[(\COM ^{*}_{2})]{\psi \ERGO \varphi}{ \infer{\psi , \varphi
\ERGO \varphi}{\varphi \ERGO \varphi}}} }} }
  \]
\end{footnotesize} 
\end{example}
\end{remark}

\section{From 2-systems to hypersequent rules and back}
\label{sec:proc}
We show how to rewrite a 2-system $\textit{Sys}$ into
the corresponding  hypersequent rule $\textit{Hr}_{\textit{Sys}}$;
vice versa, from a hypersequent rule $\textit{Hr}$ 
we construct the corresponding 2-system $\textit{Sys}_{\textit{Hr}}$.  The
transformation of derivations from $\HJ + \textit{Hr}$ into \mbox{$\LJ +
\textit{Sys}_{\textit{Hr}}$} (and from $\LJ + \textit{Sys}$ into $\HJ +
\textit{Hr}_{\textit{Sys}}$) is shown in
Section~\ref{sec:equivalence}.

\subsubsection*{From 2-systems to hypersequent rules}
\label{syshyp}

Given a 2-system $\textit{Sys}$ of the form
\[ \infer[(r_{B})] {\Gamma \ERGO \Pi} { \infer*{\Gamma \ERGO
\Pi} {{\mathcal D}_{1}} & \dots & \infer*{\Gamma \ERGO \Pi}{{\mathcal
D}_{k}} }
\]
 where each derivation ${\mathcal D}_{i}$, for $1
\leq i \leq k$, may contain several applications of the rule
\[ \infer[(r_{ i})] { \theta _{i}^{1}$, \dots , $ \theta _{i}^{n_{i}} ,
\Gamma _{i} \ERGO \Pi _{i}}{ \varphi _{i}^{1}, \dots , \varphi
_{i}^{l_{i}}, \Gamma _{i} \ERGO \Pi _{i} \quad \dots \quad
\psi_{i}^{1}, \dots , \psi_{i}^{m_{i}}, \Gamma _{i} \ERGO \Pi _{i}}
\] 
the corresponding hypersequent rule $\textit{Hr}_{\textit{Sys}}$ is as follows:
\[\infer[] {G \hh \theta _{1}^{1}$, \dots , $ \theta _{1}^{n_{1}} ,
\Gamma _{1} \ERGO \Pi _{1} \hh \dots \hh \theta _{k}^{1}$, \dots , $
\theta _{k}^{n_{k}} , \Gamma _{k} \ERGO \Pi _{k}} 
{M_{1} &&&& \dots  &&&& M_{k}}
\]
where $M_{i}$, for $1 \leq i \leq  k$, is the multiset of premisses 
\[ G \hh  \varphi _{i}^{1}, \dots , \varphi
_{i}^{l_{i}}, \Gamma _{i} \ERGO \Pi _{i} \quad \dots \quad G\hh 
\psi_{i}^{1}, \dots , \psi_{i}^{m_{i}}, \Gamma _{i} \ERGO \Pi _{i}\]

\begin{example}\label{ex:hyp_to_sys}
From Negri's 2-system in Example~\ref{ex:sys_com} we obtain the rule
acting on formulae $\varphi, \psi$
\[ \infer[(\COM ^{*})]{G \hh \psi, \Gamma_{1} \ERGO \Pi _{1} \hh \varphi  ,
\Gamma _{2} \ERGO \Pi _{2} }{G \hh \varphi, \psi,  \Gamma_{1} \ERGO \Pi
_{1} & G \hh \varphi, \psi , \Gamma _{2} \ERGO \Pi _{2} }
\]
 \end{example}

\subsubsection*{From hypersequent rules to 2-systems}
\label{hypsys}
Given any hypersequent rule $\textit{Hr}$ of the form
\[\infer{G \hh \Theta ^{1}_{1}$, \dots , $ \Theta _{1}^{n_{1}}
, \Gamma_{1} \ERGO \Pi _{1} \hh \dots \hh \Theta _{k}^{1}$, \dots , $ \Theta
_{k}^{n_{k}} , \Gamma_{k} \ERGO \Pi _{k}} {M_{1} &&&& \dots &&&&  M_{k}
}
\] where the sets $M_{i}$, for $1 \leq i \leq k$, constitute a
partition of the set of premisses of  $\textit{Hr}$ and each $M_{i}$
contains the premisses
\[ G \hh C ^{1}_{i} \quad \dots \quad G \hh C ^{m_{i}}_{i}\] where $
C ^{1}_{i} , \dots , C ^{m_{i}}_{i}$ are sequents. 
The corresponding 2-system $\textit{Sys}_{\textit{Hr}}$ is
\[
\infer[(r_{B})]{\Gamma \ERGO \Pi}
{\infer*{\Gamma \ERGO \Pi}{{\mathcal D}_{1}} & \dots & \infer*{\Gamma \ERGO \Pi}{{\mathcal D}_{k}}}
\] where the derivation ${\mathcal D}_{i}$, for $1
\leq i \leq k$, may contain several applications of the rule
\[
\infer[(r_{i})]{\Theta _{i}^{1}$,  \dots , $ \Theta _{i}^{n_{i}} , \Gamma _{i} \ERGO \Pi _{i}}
{ C ^{1}_{i} \quad \dots \quad C ^{m_{i}}_{i}}
\]

\begin{definition} \label{def:linked} We say that the premisses of $\textit{Hr}$  contained
in $M_{i}$, for $1 \leq i < k$, are \emph{linked} to the
component $\Theta _{i}^{1}, \dots , \Theta _{i}^{n_{1}} , \Gamma_{i}\ERGO \Pi
_{i}$ of the conclusion.
\end{definition}

\begin{example}\label{ex:sys_to_hyp}
The rewriting $\textit{Sys}_{(\COM )}$ of the rule $(\COM )$ in Example~\ref{ex:com} is 
\[ \infer[(\COM _{B})]{\Gamma \ERGO \Pi}{\infer*{\Gamma \ERGO
\Pi}{\infer[(\COM _{1})]{\Psi , \Gamma_{1} \ERGO \Pi _{1}}{\Phi  , \Gamma _{1}
\ERGO \Pi _{1}}} & \infer*{\Gamma \ERGO \Pi}{\infer[(\COM _{2})]{\Phi  ,
\Gamma _{2} \ERGO \Pi _{2}}{\Psi , \Gamma _{2} \ERGO \Pi _{2}}}}
\]
 \end{example}

\section{Embedding the two formalisms}
\label{sec:equivalence}

We introduce algorithms for transforming 2-system derivations into
hypersequent derivations and vice versa.

\subsection{From 2-systems to hypersequent derivations} 
\label{sec:sys_to_hyp}

Given any set $\mathbb{S}$ of 2-systems and set $\mathbb{H}$ of
hypersequent rules s.t.\ if $\textit{Sys} \in \mathbb{S}$ then
$\textit{Hr}_{\textit{Sys}} \in \mathbb{H}$, starting from a
derivation ${\mathcal D}$ in $\LJ +\mathbb{S}$ we construct a
derivation ${\mathcal D}'$ in $\HJ +\mathbb{H}$ of the same
end-sequent. The construction proceeds by a stepwise translation of
the rules in ${\mathcal D}$: the rules of $\LJ$ are translated into
rules of $\HJ$ -- possibly using $(EW)$ -- and, for the 2-systems in
$\mathbb{S}$, the top rules are translated into applications of the
corresponding rules in $\mathbb{H}$ -- and additional $(EW)$, if
needed -- and the bottom rules are translated into applications of
$(EC)$. To keep
track of the various translation steps, we mark the derivation ${\mathcal
D}$.  We start by marking and translating the leaves of ${\mathcal
D}$. The rules with marked premisses are then translated one by one
and the marks are moved to the conclusions of the rules. The process
is repeated until we reach and translate the root of ${\mathcal D}$.
The correct termination of the procedure is guaranteed 
when ${\mathcal D}$ satisfies the following conditions
\begin{enumerate}
\item \label{item:1} two applications of a top rule belonging to the same 2-system
instance never occur on the same path of the derivation,
\item \label{item:2} 
for each pair of 2-system instances, no top rule of one of the two
instances occurs below any top rule of the other instance (see
Definition~\ref{def:ent} as used in Lemma~\ref{lem:system_division})
\end{enumerate}
Section~\ref{syst_norm_form} shows that each 2-system derivation can be transformed into one satisfying them.

\subsubsection*{The algorithm}
\label{tr:sys-hyp} 
Input: a derivation ${\mathcal D}$ in $\LJ +\mathbb{S}$. Output:
a derivation ${\mathcal D}'$ of the same sequent in $\HJ +\mathbb{H}$.

\noindent \textbf{\emph{Translating axioms.}}  The leaves of ${\mathcal
D}$ are marked and copied as leaves of ${\mathcal D}'$.

\noindent \textbf{\emph{Translating rules.}}  Rules are translated one
by one in the following order: first the one-premiss logical and
structural rules applied to marked sequents, then the two-premiss
logical rules and bottom rules with all premisses marked, and finally
all the top rules of one 2-system instance\footnote{Condition~\ref{item:1} guarantees that all top rules of
  a 2-system instance can be translated by one hypersequent rule.}. After having translated each
rule -- or all top rules of a 2-system instance --
we remove the marks from the premisses of the translated rules and
mark their conclusions.

When we translate the top rules of a 2-system we apply
the corresponding hypersequent rule once for each possible combination
of different top rules of such system. For instance, if a 2-system
contains two applications $(r_{1})'$ and $(r_{1})''$ of one top rule,
and one application $(r_{2})$ of another top rule, we will have one
hypersequent rule application translating the pair $\langle(r_{1})',
(r_{2})\rangle$, and one hypersequent rule application translating the
pair $\langle(r_{1})'',(r_{2})\rangle$.

Since the $\LJ$ rules are particular instances of $\HJ$ rules,
we only show how to translate 2-systems. Hence, consider a 2-system $\textit{Sys} \in \mathbb{S}$  applied in ${\mathcal D}$ with
the following instances of
\begin{enumerate}
\item top rules:
\[
\infer[(r_{1})]{\Delta_{1} , \Gamma _{1} \ERGO \Pi
_{1}}{\infer*{C_{1}^{1}}{} \quad \dots \quad
\infer*{C_{1}^{m_{1}}}{}}
\qquad \dots \qquad 
\infer[(r_{k})]{\Delta _{k}, \Gamma _{k } \ERGO \Pi _{k }}{\infer*{C_{k}^{1}}{} \quad \dots \quad
\infer*{C_{k}^{m_{k}}}{}}
\]
where $ C_{1}^{1} , \dots ,
C_{1}^{m_{1}}, \dots , C_{k}^{1} ,
\dots , C_{k}^{m_{k}}$ are marked sequents and each top
  rule  $(r_{1}), \dots , (r_{k})$ is
possibly applied more than once.

By the definition of the algorithm, we have hypersequent derivations of
\[ G \hh C_{1}^{1} \quad \dots \quad G \hh
C_{1}^{m_{1}} \qquad \dots \qquad G \hh C_{k}^{1} \quad
\dots \quad G \hh C_{k}^{m_{k}}
\]
for each application of the top rules.
We apply $\textit{Hr}_{\textit{Sys}}$ as follows
\[\infer[] {G \hh \Delta _{1},
\Gamma _{1} \ERGO \Pi _{1} \hh \dots \hh \Delta _{k} , \Gamma _{k } \ERGO \Pi _{k }} 
{M_{1} &&&& \dots  &&&& M_{k }}
\] for each possible combination of $k$ applications of
the top rules $(r_{1}), \dots , (r_{k})$ -- possibly duplicating the
hypersequent derivations previously obtained.
We move the marks to
the conclusions of $(r_1), \dots , (r_k)$.

Notice that we always have hypersequents containing
suitable active components and matching context components. Indeed,
given that we translate into a hypersequent rule application each
possible combination of top rules, at each translation step (above
the bottom rule) we have exactly one hypersequent for
each possible combination of marked sequents.

\item bottom rule:
\[
\infer[(r_{B})]{\Gamma \ERGO \Pi}{\infer*{\Gamma \ERGO \Pi}{} & \infer*{\Gamma \ERGO \Pi}{}}
\]
Without loss of generality we can assume that the top rules of the considered 2-system have been applied
above the premisses of $(r_{B})$ -- as otherwise the application of the 2-system is redundant. 
Hence we have a derivation in $\HJ +\mathbb{H}$
of $G \hh \Gamma \ERGO \Pi \hh \dots \hh \Gamma
\ERGO \Pi$. The desired derivation of $G \hh \Gamma \ERGO \Pi$ is obtained by repeatedly applying $(EC)$.
We move the marks to the conclusion of $(r_{B})$.
\end{enumerate}

\begin{theorem} \label{thm:sys-hyp} For any set $\mathbb{H}$ of
hypersequent rules and set $\mathbb{S}$ of 2-systems s.t.\ if
$\textit{Sys}\in \mathbb{S}$ then $\textit{Hr}_{\textit{Sys}} \in \mathbb{H}$, if $\DER_{\LJ
+\mathbb{S}} \Gamma \ERGO \Pi$ then $\DER_{\HJ +\mathbb{H}} \Gamma
\ERGO \Pi$.
\end{theorem}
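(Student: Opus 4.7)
The plan is to first invoke the normal form result of Section~\ref{syst_norm_form} so as to assume, without loss of generality, that the input derivation $\mathcal{D}$ in $\LJ+\mathbb{S}$ satisfies conditions~\ref{item:1} and~\ref{item:2}, and then run the translation algorithm verbatim as already described, establishing its correctness by induction on the number of translation steps remaining. The invariant I would maintain after each round of marking is the following: if the currently marked sequents stemming from still-open 2-system instances are $\{C_i^{(j)}\}$ (where $j$ indexes the instance and $i$ ranges over the marked premisses of its top rules), and $C$ is any currently marked sequent, then for every choice $(i_1,\dots,i_\ell)$ of one marked sequent per open instance there is a derivation in $\HJ+\mathbb{H}$ of the hypersequent $C_{i_1}^{(1)} \hh \dots \hh C_{i_\ell}^{(\ell)} \hh C$.

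The base case is the marking of axioms, at which point there are no open instances and each axiom of $\LJ$ is also an axiom of $\HJ$. The induction step splits into four cases. For a one-premiss $\LJ$-rule, the same rule of $\HJ$ applies uniformly to every ``parallel copy'' of the derivation indexed by the tuples of combinations, preserving the invariant. For a two-premiss $\LJ$-rule, the two premisses carry the same marked conclusion $C$, so the corresponding context-sharing rule of $\HJ$ splices together the combinations from the two sides; the single-active-component assumption on hypersequent rules guarantees that the contexts match and that no component is lost. For the top rules of a 2-system $\textit{Sys}$: by condition~\ref{item:1} its top-rule applications lie on pairwise disjoint branches, so for each tuple of such applications the inductive hypothesis supplies, for each premiss of each selected top-rule application, a derivation of the appropriate hypersequent; a single application of $\textit{Hr}_{\textit{Sys}}$ per tuple then closes all these premisses at once and produces a hypersequent whose active components record the choice of top-rule applications. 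Finally, for the bottom rule $(r_B)$ of a 2-system, the previous step yields a hypersequent of the form $G \hh \Gamma \ERGO \Pi \hh \dots \hh \Gamma \ERGO \Pi$, with one $\Gamma \ERGO \Pi$ per translated tuple, which collapses to $G \hh \Gamma \ERGO \Pi$ via repeated $(EC)$.

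The main obstacle is the bookkeeping that ensures the last two cases can in fact be carried out in the prescribed order: one must verify that when the top rules of some $\textit{Sys}$ are translated, no other 2-system instance is still open \emph{below} them, since otherwise the contexts $G$ attached to the already-built hypersequent derivations would not line up with the context required by $\textit{Hr}_{\textit{Sys}}$. This is exactly what condition~\ref{item:2} secures (formalized via the dependency order of Definition~\ref{def:ent} and Lemma~\ref{lem:system_division}): the set of top-rule applications of any 2-system instance forms an antichain in $\mathcal{D}$ and is free of any other instance's top rules lying beneath it. Together with the context-sharing and single-active-component assumptions on hypersequent rules, this guarantees that the algorithm terminates with a single marked sequent $\Gamma \ERGO \Pi$ at the root and no open 2-system instances; hence the accompanying derivation in $\HJ+\mathbb{H}$ is of the single-component hypersequent $\Gamma \ERGO \Pi$, yielding the claim.
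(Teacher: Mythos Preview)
Your proposal is correct and follows the same approach as the paper: normalize the $\LJ+\mathbb{S}$ derivation via Section~\ref{syst_norm_form}, run the marking/translation algorithm, and argue it reaches the root by appealing to Lemma~\ref{lem:system_division} for the existence of a next translatable 2-system instance. The paper's proof is terser---it inducts on the number of 2-system instances with untranslated top rules rather than maintaining your explicit hypersequent invariant---and one small wording point: Lemma~\ref{lem:system_division} only guarantees that \emph{some} untranslated instance has no other instance's untranslated top rules beneath it (a minimal element in the induced order), not that \emph{every} instance enjoys this, so your phrase ``any 2-system instance'' should read ``some''.
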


\begin{proof} Apply the above algorithm to the $\LJ +\mathbb{S}$
derivation ${\mathcal D}$ of $ \Gamma \ERGO \Pi$ to obtain ${\mathcal D}'$.
The algorithm terminates because the number of rule applications in a
derivation is finite. We show that the
algorithm does not stop before translating the root of ${\mathcal
D}$. The proof is by induction on the number $u$ of 2-system
instances whose top rules are still to be translated.  
If $u=0$ all remaining rules can be translated as soon as the
premisses are marked. Assume $u = n+1$.  
Lemma~\ref{lem:system_division} assures
that there is at least a 2-system
instance $S$ whose top rules are still untranslated and do not occur
below any untranslated top rule. Hence the rule applications that have to be translated
before the top rules of $S$ do not belong to any 2-system and can be
translated as soon as their premisses are marked. After translating
these rules, we can translate the top rules of $S$ and obtain $u = n$.
\end{proof}

\begin{example}
\label{ex:translation}
  The following
  derivation in the calculus
 $\LJ + \textit{Sys}_{(\COM )}$ for G\"{o}del logic (see
Example~\ref{ex:sys_to_hyp})   
\begin{small}
  \[ \infer[(\COM_{B})] {\ERGO ((\varphi \IMPL \psi) \ET (\varphi
      \IMPL \psi) ) \VEL (\psi \IMPL \varphi)}{\infer{\ERGO ((\varphi
        \IMPL \psi) \ET (\varphi \IMPL \psi) ) \VEL (\psi \IMPL
        \varphi)}{ \infer{\ERGO (\varphi \IMPL \psi) \ET (\varphi
          \IMPL \psi)}{ \infer{ \ERGO \varphi \IMPL \psi}{\infer[(\COM
            _{1})'] {\varphi \ERGO \psi}{\psi \ERGO \psi}} &
          \infer{\ERGO \varphi \IMPL \psi }{\infer[(\COM _{1})'']
            {\varphi \ERGO \psi}{\psi \ERGO \psi}}}} & \infer {\ERGO
        ((\varphi \IMPL \psi) \ET (\varphi \IMPL \psi) ) \VEL (\psi
        \IMPL \varphi)}{ \infer{ \ERGO \psi \IMPL
          \varphi}{\infer[(\COM _{2})] { \psi \ERGO \varphi } {
            \varphi \ERGO \varphi}}}}
  \]
\end{small}is translated into the $\HJ +(\COM )$ derivation (see
  Example~\ref{ex:com})
    \[
      \infer[(EC)]{\ERGO ((\varphi \IMPL \psi) \ET (\varphi
\IMPL \psi) ) \VEL (\psi \IMPL \varphi)} {\infer{\ERGO ((\varphi \IMPL \psi) \ET (\varphi
\IMPL \psi) ) \VEL (\psi \IMPL \varphi) \hh \ERGO ((\varphi \IMPL \psi) \ET (\varphi
\IMPL \psi) ) \VEL (\psi \IMPL \varphi)} 
{\infer{\ERGO (\varphi \IMPL \psi) \ET (\varphi
\IMPL \psi) \hh \ERGO  ((\varphi \IMPL \psi) \ET (\varphi
\IMPL \psi) ) \VEL (\psi \IMPL \varphi)} 
{\infer{\ERGO (\varphi \IMPL \psi) \ET (\varphi
\IMPL \psi) \hh \ERGO \psi \IMPL \varphi} { 
\infer{ \ERGO \varphi
\IMPL \psi \hh \ERGO \psi \IMPL \varphi } {\infer{ \varphi
\ERGO \psi \hh \ERGO \psi \IMPL \varphi}{\infer [(\COM )']{\varphi
\ERGO \psi \hh \psi \ERGO \varphi}{\psi
\ERGO \psi & \varphi \ERGO \varphi}}} &  
\infer{ \ERGO \varphi
\IMPL \psi \hh \ERGO \psi \IMPL \varphi } {\infer{ \varphi
\ERGO \psi \hh \ERGO \psi \IMPL \varphi}{\infer [(\COM )'']{\varphi
\ERGO \psi \hh \psi \ERGO \varphi}{\psi
\ERGO \psi & \varphi \ERGO \varphi}}} }}}}
    \]
where $(\COM )'$ translates the pair of top rule applications $\langle
(\COM _{1})' , (\COM _{2}) \rangle $, while $(\COM )''$ translates the pair  $\langle
(\COM _{1})'' , (\COM _{2}) \rangle $.
\end{example}

\subsubsection{Normal forms of $2$-systems derivations}
\label{syst_norm_form}

We introduce the normal forms of 2-system derivations
needed by the algorithm of Section~\ref{tr:sys-hyp} and we show how to
obtain them. The definition of 2-systems (Def.~\ref{def:2systems}) is
indeed decidedly liberal. It allows unrestricted nesting of 2-systems
and does not limit the application of the top rule $(r_{i})$ inside
$\mathcal{D}_{i}$. Such freedom matches naturally the general idea of
a system of rules, but complicates the structure of derivations and the
algorithm for transforming 2-system derivations into hypersequent derivations. We show
below that w.l.o.g. we can consider derivations of a simplified form.

  \begin{lemma}\label{lem:no_same_path}
Any 2-system derivation can be transformed into one with the following
 property: two applications of a top rule $(t)$
belonging to the same 2-system instance never occur on the same
path of the derivation.
  \end{lemma}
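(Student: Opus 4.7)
The plan is to proceed by induction on a suitable well-founded measure of how badly the derivation violates the desired property. A natural choice is $\mu(\mathcal{D}) := $ the number of ordered pairs $(\alpha,\beta)$ of top-rule applications belonging to the same 2-system instance and to the same top rule $(t)$, such that $\alpha$ appears strictly above $\beta$ on some branch of $\mathcal{D}$. The base case $\mu(\mathcal{D})=0$ is immediate, so the content is in the inductive step: given $\mathcal{D}$ with $\mu(\mathcal{D})>0$, I would construct a derivation $\mathcal{D}'$ of the same endsequent with $\mu(\mathcal{D}') < \mu(\mathcal{D})$.

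To carry out the inductive step, I would pick a \emph{topmost} offending application $\alpha$: an application of some top rule $(r_i)$ of some 2-system instance $\mathcal{I}$ that has another application of $(r_i)$ of $\mathcal{I}$ below it on the same branch, but none of any $(r_j)$ of $\mathcal{I}$ above it on that branch. The key move is to \emph{split} $\mathcal{I}$, so that $\alpha$ is reattributed to a freshly introduced 2-system instance $\mathcal{I}'$ of the same rule template, while the lower application of $(r_i)$ stays in $\mathcal{I}$. Concretely, let $S=\Sigma_0,\Gamma''\ERGO\Pi''$ be the conclusion of $\alpha$. Just below $S$ I would insert a new application of the bottom rule $(r_B')$ of the fresh instance $\mathcal{I}'$, with $S$ as conclusion and $k$ copies of $S$ as premises; the $i$-th premise is derived by the subderivation originally ending at $S$ (containing $\alpha$), which becomes $\mathcal{D}_i^{\mathrm{new}}$ of $\mathcal{I}'$, and the remaining $k-1$ premises are filled in by copies of the same subderivation in which the top-rule label on $\alpha$ is recorded as belonging to the \emph{original} instance $\mathcal{I}$ rather than to $\mathcal{I}'$, so that these copies contain no top-rule applications of $\mathcal{I}'$ and are therefore legal $\mathcal{D}_j^{\mathrm{new}}$'s for $j\neq i$. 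After the split, $\alpha$ is the sole $(r_i)$-application of $\mathcal{I}'$, the lower application of $(r_i)$ belongs to $\mathcal{I}$, and the pair $(\alpha,\beta)$ no longer contributes to $\mu$. One has to check that no new pairs are introduced in $\mathcal{I}$ or $\mathcal{I}'$: in $\mathcal{I}'$ there is only one $(r_i)$-application at all, and in $\mathcal{I}$ the copies above $(r_B')$ contain (by the choice of $\alpha$ as topmost) no $(r_j)$-applications of $\mathcal{I}$, so the measure strictly decreases.

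The main obstacle I anticipate is precisely the bookkeeping of which top-rule applications belong to which 2-system instance after the split, because $\mathcal{I}$ already had non-trivial subderivations $\mathcal{D}_1,\dots,\mathcal{D}_k$ and inserting a new $(r_B')$ in the middle of one of them interacts with those subderivations. To handle this I would argue, using the topmost choice of $\alpha$, that the segment of $\mathcal{D}$ strictly above $S$ contains only top rules of $\mathcal{I}$ of type $(r_i)$ (namely $\alpha$ and possibly further applications still higher up, which are handled in later inductive steps) together with rules of the base calculus $\LJ$ and top rules of other, unrelated, 2-system instances. This locality is what makes the relabelling of $\alpha$ across the $k$ copies coherent with the restriction that each $\mathcal{D}_j^{\mathrm{new}}$ contain only $(r_j)$-applications of $\mathcal{I}'$. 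Once this bookkeeping is verified, the transformation is well defined, the endsequent is preserved, and iterating gives the required normal form.
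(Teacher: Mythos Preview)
Your approach has a genuine gap in the inductive step, and it also overlooks the key observation that makes the lemma almost trivial.

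The gap: you insert a fresh bottom rule $(r_B')$ with $k$ premises; above the $i$-th premise you place the original subderivation of $S$ with $\alpha$ attributed to $\mathcal{I}'$, and above the remaining $k-1$ premises you place copies of that subderivation with $\alpha$ attributed to the original instance $\mathcal{I}$. Each of those $k-1$ copies then contains an application of $(r_i)$ of $\mathcal{I}$ sitting strictly above $\beta$ on its branch, so you have traded one offending pair $(\alpha,\beta)$ for $k-1$ new ones. For $k\geq 2$ the measure $\mu$ does not decrease; for $k\geq 3$ it increases. Your later claim that ``in $\mathcal{I}$ the copies above $(r_B')$ contain no $(r_j)$-applications of $\mathcal{I}$'' directly contradicts your own stipulation that $\alpha$ in those copies is recorded as an $\mathcal{I}$-rule. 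The alternatives do not help either: leaving the copied $\alpha$ unlabelled is not allowed (top rules must belong to some instance), and labelling it as $(r_i)$ of $\mathcal{I}'$ in the $j$-th branch violates the 2-system constraint that $\mathcal{D}_j$ contains only $(r_j)$-applications.

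The missing idea: by Definition~\ref{def:2systems}, all applications of a fixed top rule $(t)$ within the \emph{same} 2-system instance act on the \emph{same} multisets $\Sigma_0,\Sigma_1,\dots,\Sigma_n$. The paper exploits exactly this. If the lower $(t)$ has $i$-th premise $\Sigma_i,\Gamma\ERGO\Pi$ and the upper $(t)$ occurs in the derivation of that premise with conclusion $\Delta,\Gamma'\ERGO\Pi'$ (where $\Delta=\Sigma_0$) and $i$-th premise $\Sigma_i,\Gamma'\ERGO\Pi'$, then one simply \emph{deletes} the upper $(t)$: keep only the subderivation of its $i$-th premise, weaken by $\Delta$ to obtain $\Sigma_i,\Delta,\Gamma'\ERGO\Pi'$, carry the extra $\Sigma_i$ through every sequent of the intermediate derivation, and apply $\CTRCT$ just above the $i$-th premise of the lower $(t)$. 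No new instances are introduced, one application of $(t)$ disappears, and iterating terminates. This is both correct and far simpler than splitting instances.
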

  \begin{proof}
Let $\mathcal{P}$ be a 2-system derivation in which 
two applications of $(t)$ occur along the same path, as, e.g., in
\[
 \infer[(t)]{\Delta , \Gamma \ERGO \Pi}{\infer*{\Sigma _{1},
\Gamma \ERGO \Pi}{} & \dots & \infer*[\mathcal{D}]{\Sigma _{i}, \Gamma
\ERGO \Pi}{\infer[(t)]{\Delta , \Gamma ' \ERGO \Pi '}{\infer*{\Sigma _{1} ,
\Gamma ' \ERGO \Pi '}{} & \dots & \infer*{\Sigma _{n} , \Gamma ' \ERGO
\Pi '}{}}} & \dots & \infer*{\Sigma _{n}, \Gamma \ERGO \Pi}{}} 
\]
We use $\WKN$ and $\CTRCT$ to  transform it into
\[ \infer[(t)]{\Delta , \Gamma \ERGO \Pi}{\infer*{\Sigma _{1}, \Gamma
\ERGO \Pi}{} & \dots & \infer=[\CTRCT]{\Sigma _{i}, \Gamma \ERGO
\Pi}{\infer*[\mathcal{D'}]{\Sigma _{i}, \Sigma _{i} , \Gamma \ERGO
\Pi} { \infer=[\WKN]{\Sigma_{i} , \Delta , \Gamma ' \ERGO
\Pi'}{\infer*{\Sigma _{i} , \Gamma ' \ERGO \Pi ' }{}} }} & \dots & \infer*{\Sigma _{n}, \Gamma \ERGO
\Pi}{} }
\]
where for each sequent $\Gamma '' \ERGO \Pi ''$ in $\mathcal{D}$
there is a sequent $\Sigma_{i} , \Gamma '' \ERGO \Pi ''$ in
$\mathcal{D}'$. 
  \end{proof}

Derivations using 2-systems can be further simplified. Indeed the
lemma below shows that we can restrict our attention to derivations
with a limited nesting of $2$-systems.
We use the notion of entanglement to formalise a
violation of this limitation.
 \begin{definition}\label{def:ent}
  Two 2-system instances $S_1$ and $S_2$ are
  {\em entangled} if some top rules of $S_1$ occur above some top
  rules of $S_2$ and some of the former occur below some of the
  latter.
\end{definition}
Consider, for instance, the following derivation schema containing two
2-system instances $a$ and $b$ with bottom rules BOT($a$) and
BOT($b$) and top rules $a_1, a_2$ and $b_1,b_2$, respectively:
\[
\begin{tikzpicture} [grow'=up]
\Tree [.BOT$(b)$ [.BOT$(a)$ [.$b_{1}$ \edge node[auto=right]{$\mathcal{D}$};
[.$a_{1}$  ]][.$a_{2}$ \edge node[auto=right]{$\mathcal{E}$};
[.$b_{1}$ ]]][.$b_{2}$ [.$\mathcal{F}$ ]]]
\end{tikzpicture}
\]
We use $\mathcal{D}$,
$\mathcal{E}$ and $\mathcal{F}$ to denote derivations.  The
entanglement here occurs because $b_1$ is applied once below $a_1$
and once above $a_2$.
\begin{remark}
  If two 2-system instances are entangled, then all rules of one of
  them occur necessarily above exactly one premiss of the bottom rule
  of the other.
\end{remark}
\begin{example}\label{ex:e-red} 
To disentangle $a$ and $b$, we  make
two copies $b'$ and $b''$ of $b$ that are going to contain the rules formerly belonging to $b$:
\[
\begin{tikzpicture} [grow'=up]
\Tree [.BOT$(b'')$ [.BOT$(b')$ [.BOT(a) [.$b''_{1}$ \edge
node[auto=right]{$\mathcal{D}$}; [.$a_{1}$  ]][.$a_{2}$ \edge
node[auto=right]{$\mathcal{E}$}; [.$b'_{1}$ ]]][.$b'_{2}$
 [.$\mathcal{F}$ ]]
][.$b''_{2}$  [.$\mathcal{F}$ ]]]
\end{tikzpicture}
\]  The 2-system instances are now disentangled: no top rule of $b'$
occurs below any top rule of $a$ and no top rule of $b''$ occurs above
any top rule of $a$.
\end{example}
The above transformation is the basic step employed in
the following lemma.

\begin{lemma}\label{lem:system_division}
Any 2-system derivation $\mathcal{P}$ can be transformed into a 2-system
derivation $\mathcal{P}'$ of the same end-sequent in which no
entanglement occurs.
\end{lemma}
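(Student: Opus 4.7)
The plan is to prove the lemma by induction on a well-founded measure counting entanglements. I would define $\mu(\mathcal{P})$ to be the number of unordered pairs of 2-system instances in $\mathcal{P}$ that are entangled in the sense of Definition~\ref{def:ent}, possibly refined lexicographically with a secondary parameter such as the maximum height of the bottom rule of a 2-system involved in an entanglement. If $\mu(\mathcal{P})=0$, then $\mathcal{P}'=\mathcal{P}$ and we are done.

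For the inductive step, I would choose an entangled pair $(a,b)$ in which the bottom rule of $a$ is as high as possible in $\mathcal{P}$, so that no 2-system instance entangled with $a$ sits strictly above it. By the remark following Definition~\ref{def:ent}, all rules of $a$ then lie above exactly one premiss of BOT$(b)$. I apply the transformation illustrated in Example~\ref{ex:e-red}: introduce two fresh 2-system instances $b'$ and $b''$ of the same 2-system as $b$, relabelling each top-rule application of $b$ that occurs below some rule of $a$ as a top-rule application of $b'$ and each that occurs above all rules of $a$ as a top-rule application of $b''$, while duplicating the subderivation above the premiss of BOT$(b)$ that does not contain $a$, so that both $b'$ and $b''$ inherit a copy of the corresponding top-rule applications. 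Finally, one places BOT$(b')$ above one premiss of a fresh BOT$(b'')$. The result is a legal 2-system derivation $\mathcal{Q}$ with the same end-sequent as $\mathcal{P}$.

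The bulk of the proof lies in verifying that $\mu(\mathcal{Q})<\mu(\mathcal{P})$. The pair $(a,b)$ itself disappears; the pairs $(a,b')$ and $(a,b'')$ are not entangled because all top rules of $a$ sit below all top rules of $b'$ and above all top rules of $b''$; and $(b',b'')$ is not entangled because their top rules lie on disjoint branches above BOT$(b'')$. The main obstacle—and the step that motivates the secondary parameter—is to show that duplicating the non-$a$ branch does not create any new entanglement with a third 2-system instance $c$. I would handle this by case analysis on the location of $c$'s rules: if $c$ lies entirely outside the scope of BOT$(b)$, it is unaffected; if $c$ lies on the $a$-branch above BOT$(b)$, its relations to $b'$ and to $b''$ exactly mirror its former relation to $b$, so the number of entanglements involving $c$ does not grow; and if $c$ lies on the duplicated (non-$a$) branch, its two copies $c'$ and $c''$ sit on disjoint subtrees above BOT$(b'')$, so they are neither entangled with each other nor with anything on the $a$-branch, and any former entanglement of $c$ with a 2-system $d$ also on the non-$a$ branch splits into at most the analogous entanglements of $c'$ with $d'$ and of $c''$ with $d''$—here the choice of $a$ as highest among entangled partners of $b$ ensures that we eventually reach a measure strictly below $\mu(\mathcal{P})$, closing the induction.
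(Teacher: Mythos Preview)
Your basic disentangling move matches the paper's \emph{e-reduction} (Example~\ref{ex:e-red}), but the termination argument has a genuine gap. The measure you propose---the number of entangled pairs, lexicographically refined by a height---is not under control. When you duplicate the non-$a$ branch of $b$, every entanglement $(c,d)$ living entirely inside that branch is duplicated too, becoming both $(c',d')$ and $(c'',d'')$. You note this yourself, but your closing sentence (``the choice of $a$ as highest \dots\ ensures that we eventually reach a measure strictly below $\mu(\mathcal{P})$'') is not an argument: having $\mathrm{BOT}(a)$ maximal in height among systems appearing in an entangled pair says nothing about entanglements lying on the \emph{other} branches of $b$, which sit on a different path from the root and may carry arbitrarily many entangled pairs at smaller heights. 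So one step can strictly increase your primary count, and a secondary height parameter cannot compensate in the lexicographic order you describe.

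The paper's proof neutralises exactly this duplication effect. It groups each 2-system instance with all copies created by previous e-reductions into an equivalence class, and takes as primary measure the number $\kappa$ of \emph{classes} containing an entangled instance; duplication creates only new members of existing classes, so $\kappa$ never rises. The secondary and tertiary parameters (a maximal ``e-number'' within the class currently being processed, and the number of instances realising that maximum) are then shown to drop when one e-reduces an \emph{uppermost} element of the class of $S^{\textit{low}}$ with maximal e-number---a rather different selection strategy from yours. To repair your argument you would need an invariant of this kind that is blind to duplication; the raw pair count is not.

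A minor point: your labeling is inconsistent. You assign to $b'$ the top rules of $b$ lying \emph{below} rules of $a$ and to $b''$ those lying \emph{above}, but then claim ``all top rules of $a$ sit below all top rules of $b'$'', which is the reverse. This does not affect the substance but should be cleaned up.
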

\begin{proof} 
First we introduce a transformation of derivations
(\emph{e-reduction}) that reduces the number of top rule applications
involved in entanglements. Then we provide a strategy to obtain the
desired derivation $\mathcal{P}' $ using such transformation, and we
prove termination.

\noindent  \emph{E-reduction}: given a 2-system instance $S$ (with bottom rule
$(B_S)$) entangled with 2-system instances $S_{1}, \dots , S_{n}$:
\[\infer[(B_{S})]{\Gamma \ERGO \Pi }{ \infer*{\Gamma
\ERGO \Pi}{ \mathcal{D}_{1}}  & \dots & \infer*{\Gamma
\ERGO \Pi}{ \mathcal{D}_{n}}}
\]
we make two copies $S'$ and $S''$ of $S$ with bottom rules $(B_{S'})$
respectively $(B_{S''})$:
  \[\infer[(B_{S ''})] {\Gamma \ERGO \Pi } { \infer[(B_{S'})] {\Gamma
\ERGO \Pi } {\infer*{\Gamma \ERGO \Pi }{\mathcal{D}_{1}'} &
\infer*{\Gamma \ERGO \Pi }{\mathcal{D}_{2} } & \dots & \infer*{\Gamma
\ERGO \Pi }{\mathcal{D}_{m}}} & \infer*{\Gamma \ERGO \Pi
}{\mathcal{D}_{2}} & \dots & \infer*{\Gamma \ERGO \Pi
}{\mathcal{D}_{n}}}\] in such a way that:
\begin{itemize}
\item if a top rule in $\mathcal{D}_{1}$ belonging to $S$ occurs above
a top rule of one among $S_{1}, \dots , S_{n}$, then its copy in
$\mathcal{D}_{1}'$ belongs to $S'$,
\item if a top rule in $\mathcal{D}_{1}$ belonging to $S$ occurs below
a top rule of one among $S_{1}, \dots , S_{n}$, then its copy  in
$\mathcal{D}_{1}'$ belongs to $S''$.
\end{itemize}

Notice that in the obtained derivation no top rule of $S'$ occurs
below any top rule of $S_{1}, \dots , S_{n}$, and no top rule of $S''$
occurs above any top rule of $S_{1}, \dots , S_{n}$. Moreover, also due to
Lemma~\ref{lem:no_same_path}:

\smallskip

\noindent $(\ast)$ neither $S'$ and $S''$ nor two copies of the same 2-system
instance in $\mathcal{D}_{2}, \dots ,\mathcal{D}_{n}$ can be entangled
or have top rules along the same path of the derivation.

\smallskip

A strategy to apply e-reductions that leads to the required
derivation $\mathcal{P}'$ is the following.
We start reducing one of
the 2-system instances with lowermost bottom rule. Whenever we apply
an e-reduction we collect all entangled copies of the same 2-system
instance in the same class. We continue the disentanglement focusing
on a single class and reducing all its elements before we move on to
another class. Notice that the number of classes never increases and
is bounded by the number of 2-system instances in the original
derivation. Fixed a class, the strategy guarantees that its elements
are disentangled one by one without duplicating other maximally
entangled elements of the same class.

To formalise this strategy let us introduce some auxiliary notions.
We define the equivalence relation $\sim$ as the transitive and symmetric
closure of the binary relation that holds between a 2-system instance
and any of its copies generated by an e-reduction -- notice that
e-reductions do not only copy $S$ but also the 2-system instances in
$\mathcal{D}_{2}, \dots ,\mathcal{D}_{n}$. Given any 2-system
derivation $\mathcal{P}$, let us denote by $E^{\mathcal{P}}$ the set
of all entangled 2-system instances in $\mathcal{P}$, and by
$E^{\mathcal{P}} /_{\sim}$ the quotient set of $E^{\mathcal{P}}$
w.r.t.\ the equivalence relation $\sim$. Moreover, we denote by
$S^{\textit{low}}$ the 2-system instance in $E^{\mathcal{P}}$ which
has the lowest and leftmost bottom rule in $\mathcal{P}$.  Finally, we
compute the \emph{entanglement number} (\emph{e-number} for short) of
a 2-system instance $S$ as follows: for each derivation $\mathcal{D}$
of a premiss of the bottom rule of $S$ we count the number of
equivalence classes containing 2-system instances that have top rules
in $\mathcal{D}$ and are entangled with $S$, then we sum all the resulting
numbers up to obtain the e-number of $S$.

We prove now the statement of the lemma by induction on the
lexicographically ordered triple $\langle \kappa , \mu , \nu
\rangle$  where, fixed the derivation $\mathcal{P}$, 
\begin{itemize} 
\item $\kappa$ is the cardinality of $E^{\mathcal{P}} /
_{\sim}$, i.e.\ the number of classes of entangled
2-system instances, 
\item $\mu$ is the maximum e-number of the elements
of $[S^{\textit{low}}]_{\sim} \in E^{\mathcal{P}} /_{\sim} $,
\item $\nu$ is the number of elements of $[S^{\textit{low}}]_{\sim} \in E^{\mathcal{P}} /_{\sim}$
with e-number $\mu$.
\end{itemize}

\noindent \textbf{\textit{Base case.}}  If either $\kappa$, $\mu$ or $\nu$ are
equal to 0, then no 2-system instance is entangled. Otherwise, first, $E^{\mathcal{P}} /
_{\sim}$ would contain at least one element, and $e \geq 1$. Second,
$[S^{\textit{low}}]_{\sim} \in E^{\mathcal{P}} /_{\sim} $ would not be empty and both $\mu$ and $\nu$
would be greater than 0.

\noindent \textbf{\textit{Inductive step.}}  Given any 2-system
derivation $\mathcal{P}$ with complexity $ \langle \kappa , \mu , \nu \rangle \geq
\langle 1,1,1 \rangle$ we transform it into a 2-system derivation
$\mathcal{P}'$ with complexity smaller than $\langle \kappa , \mu , \nu
\rangle$.  We obtain $\mathcal{P}'$ applying an arbitrary e-reduction
to an uppermost element $S \in [S^{\textit{low}}]_{\sim} \in
E^{\mathcal{P}} / _{\sim} $ with e-number $\mu$.

First notice that we never increase $\kappa$. Moreover, if $\nu > 1$ we reduce
$\nu$ without increasing $\mu$ and if $\nu = 1$ and $\mu > 1$ we reduce
$\mu$. Indeed, after the e-reduction all top rules of $S$ that were
involved in an entanglement with the elements of some class
$[S']_{\sim } \in E^{\mathcal{P}} / _{\sim} $ above the same premiss
of $(B_S)$, are no more involved in such entanglement. This holds
because, due to $(\ast)$ and the definition of $\sim$, the top rules
of elements contained in $[S']_{\sim } \in E^{\mathcal{P}} / _{\sim} $
cannot occur along the same path of the derivation. In general we never increase neither $\mu$ nor
$\nu$, because if we duplicate a 2-system instance during an
e-reduction, either it did not belong to $[S^{\textit{low}}]_{\sim}
\in E^{\mathcal{P}} / _{\sim}$ and hence the copies do not belong to
$[S^{\textit{low}}]_{\sim} \in E^{\mathcal{P}'} / _{\sim}$, or it did
not have maximal entanglement number w.r.t.\ the class
$[S^{\textit{low}}]_{\sim} \in E^{\mathcal{P}} / _{\sim} $, because we
always e-reduce a topmost 2-system instance among those with maximal
e-number in $[S^{\textit{low}}]_{\sim} $. 
Finally, we change the
considered class $[S^{\textit{low}}]_{\sim} $ only when it is empty,
because our e-reduction strategy chooses $S^{\textit{low}}$ only if
$[S^{\textit{low}}]_{\sim} $ is a singleton. If $\nu = 1$ and $\mu = 1$ we
reduce $\kappa$. Indeed, we replace the unique element of
$[S^{\textit{low}}]_{\sim}$ with non-entangled 2-system instances and
$[S^{\textit{low}}]_{\sim}$ does not belong to $E^{\mathcal{P}'} /
_{\sim}$.
\end{proof}

\subsection{From hypersequent to 2-system derivations} 
\label{sec:hyp_to_sys}

Given any set
$\mathbb{H}$ of hypersequent rules and set $\mathbb{S}$ of 2-systems
s.t.\ if $\textit{Hr} \in \mathbb{H}$ then $\textit{Sys}_{\textit{Hr}} \in \mathbb{S}$. Starting
from a derivation in $\HJ +\mathbb{H}$ we
construct a derivation in $\LJ +\mathbb{S}$ of the same
end-sequent.

\subsubsection*{The algorithm}
\label{sec:alg_hyp-sys}

Input: a derivation ${\mathcal D}$ of a sequent $\Gamma \ERGO \Pi$ 
in $\HJ +\mathbb{H}$. Output:
a derivation ${\mathcal D}'$ of $\Gamma \ERGO \Pi$ in $\LJ +\mathbb{S}$.

Intuitively, each application of a $\HJ$ rule in ${\mathcal D}$ is
rewritten as an application of an $\LJ$ rule in ${\mathcal D}'$. Some care
is needed to handle the external structural rules in $\mathbb{H}$ as
well as $(EW)$ and $(EC)$. To deal with the latter rules, which have
no direct translation in $\LJ +\mathbb{S}$, we consider only derivations
${\mathcal D}$ in which $(i)$ all applications of $(EC)$ occur immediately
above the root, and $(ii)$ all applications of $(EW)$ occur where
immediately needed, that is where they introduce components of the context of
rules with more than one premiss. As shown in Section~\ref{sec:prepro}
each hypersequent derivation (of a sequent) can be transformed into an equivalent one
of this form.

The rules in $\mathbb{H}$ are translated in two steps. First for each
component of the premiss of the uppermost application of $(EC)$ in
${\mathcal D}$ we find a \emph{partial derivation}, that is a derivation
in $\LJ$ extended by the top rules of the 2-systems in $\mathbb{S}$
without any applicability condition (Lemma~\ref{lem:tr_H-S}). The
desired derivation ${\mathcal D}'$ is then obtained by suitably applying
to these partial derivations the corresponding bottom rules
(Theorem~\ref{thm:hyp-sys}).

\begin{definition} \label{def:partial} A partial derivation in $\LJ +
\mathbb{S}$ is a derivation in $\LJ $ extended with the top rules
of $\mathbb{S}$ (without their applicability conditions relative to a bottom rule application).
\end{definition}
We show an example of the first part of the translation to
guide the reader's intuition through the proofs that follow.
\begin{example}\label{ex:half_hyp-sys}
Consider the  $\HJ + (\COM )$ derivation
\[\infer[(EC)]{\ERGO (\varphi \ET \psi \IMPL \theta) \VEL (\theta \IMPL \psi
\ET \varphi)}{\infer{\ERGO (\varphi \ET \psi \IMPL \theta) \VEL (\theta
\IMPL \psi \ET \varphi) \hh \ERGO (\varphi \ET \psi \IMPL \theta) \VEL (\theta
\IMPL \psi \ET \varphi)  }{\infer{\ERGO \varphi \ET \psi \IMPL
\theta \hh \ERGO (\varphi \ET \psi \IMPL \theta) \VEL (\theta
\IMPL \psi \ET \varphi)  }{ \infer{\ERGO \varphi \ET \psi \IMPL
\theta \hh \ERGO \theta
\IMPL \psi \ET \varphi}{\infer{ \varphi \ET \psi \ERGO
\theta \hh \ERGO \theta
\IMPL \psi \ET \varphi}{\infer{\varphi \ET \psi \ERGO
\theta \hh \theta \ERGO \psi \ET \varphi}{\infer[(\ET r)]{\varphi , \psi \ERGO
\theta \hh \theta \ERGO \psi \ET \varphi}{\infer[(\COM)']{\varphi , \psi \ERGO
\theta \hh \theta \ERGO \psi }{\infer{\varphi , \theta \ERGO \theta
}{\theta \ERGO \theta} & \psi \ERGO \psi} & \infer[(\COM)'']{\varphi , \psi \ERGO
\theta \hh \theta \ERGO \varphi }{\infer{\theta , \psi \ERGO
\theta}{\theta \ERGO
\theta} & \varphi \ERGO \varphi}}}}} }} }\]
and observe that it satisfies property $(i)$ and,
trivially, property $(ii)$. 
The partial derivations in $\LJ + \textit{\textit{Sys}}_{(\COM )}$
(see Ex.~\ref{ex:sys_to_hyp}) of the components of the uppermost
application of $(EC)$ in the above proof are:
\[
\infer{\ERGO (\varphi \ET \psi \IMPL \theta) \VEL (\theta
\IMPL \psi \ET \varphi)}{\infer{\ERGO \varphi \ET \psi \IMPL
\theta}{\infer{\varphi \ET \psi \ERGO
\theta}{ \infer[\textit{dummy}]{\varphi , \psi \ERGO
\theta}{ \infer[(\COM _{1})']{\varphi , \psi \ERGO
\theta}{\infer{\varphi , \theta \ERGO
\theta}{\theta \ERGO
\theta}}& \infer[(\COM _{1})'']{\varphi , \psi \ERGO
\theta}{\infer{\theta, \psi \ERGO \theta}{\theta \ERGO \theta}}}}}}
\qquad \infer{\ERGO (\varphi \ET \psi \IMPL \theta) \VEL (\theta
\IMPL \psi \ET \varphi)}{\infer{\ERGO \theta \IMPL \psi \ET
\varphi}{ \infer[(\ET r)]{\theta \ERGO \psi \ET
\varphi}{\infer[(\COM _{2})'']{\theta \ERGO \varphi}{\varphi\ERGO
  \varphi} & \infer[(\COM _{2})']{\theta \ERGO \psi }{\psi \ERGO
\psi}}}} 
\] where $(\COM _{1})'$ and $(\COM _{2})'$ translate $(\COM )'$ while
$(\COM _{1})''$ and $(\COM _{2})''$ translate $(\COM )''$.  Notice
that in order to handle the context component duplication relative to
$(\ET r)$, we apply a \emph{dummy} bottom rule.

The  partial derivations obtained have the same structure as the
hypersequent derivations of the corresponding components (see \emph{ancestor tree} in
Def.~\ref{def:ancestors}).
\end{example}

We use Definitions~\ref{def:queue_of_rules} and~\ref{def:form} to
formalise and achieve properties~$(i)$ and~$(ii)$.
\begin{definition}\label{def:queue_of_rules} For any one-premiss rule
$(r)$ we call a {\em queue of $(r)$} any sequence of consecutive
applications of $(r)$ that is neither immediately preceded nor
immediately followed by applications of $(r)$.
\end{definition}

\begin{definition} \label{def:form} We say that an $\HJ + \mathbb{H}$
derivation is in \emph{structured form} \emph{iff} all $(EC)$
applications occur in a queue immediately above the root, and all
$(EW)$ applications occur in subderivations of the form
\[ \infer[(r)]{G \hh C_{0}}{ \infer[(EW)]{G \hh C_{1}}{
\infer*{}{\infer[(EW)]{}{G_{1} \hh C_{1}}} } & \dots & \infer[(EW)]{G
\hh C_{n}}{ \infer*{}{\infer[(EW)]{}{G_{n} \hh C_{n}}} } }
\] where $(r)$ is any rule with more than one premiss and 
each component of $G$ is contained in at least one of the hypersequents $G_{1} , \dots , G_{n}$.
\end{definition} 

A derivation in structured form can be divided into a part
containing only $(EC)$ applications and a part containing the
applications of any other rule. We introduce a notation for the
hypersequent separating the two parts.
\begin{definition}\label{def:premiss_ec} If ${\mathcal D}$ is a derivation
in structured form, we denote by $\widehat{H}_{{\mathcal D}}$ the premiss
of the uppermost application of $(EC)$ in ${\mathcal D}$.
\end{definition}

\begin{definition} \label{def:ancestors} 
Given a $\HJ + \mathbb{H}$ derivation.
A sequent (hypersequent component) $C'$ is a \emph{parent} of a sequent $C$, 
denoted as $p(C, C')$, if one of the following conditions holds:
\begin{itemize}
\item $C$ is active in the conclusion of an application of some $\textit{Hr}
\in \mathbb{H}$, and $C'$ is the active component of a premiss linked to $C$ (see
Definition~\ref{def:linked});
\item $C$ is active in the conclusion of an application of a rule of
$\HJ$, and $C'$ is the active component of a premiss of such application;
\item $C$ is a context component in the conclusion of any rule
application, and $C'$ is the corresponding context component in a
premiss of such application.
\end{itemize} We say that a sequent $C'$ is an
\emph{ancestor} of a sequent $C$, and we write $a(C,
C')$, if the pair $\langle C, C' \rangle$ is in the transitive closure of the
relation $p(\cdot , \cdot)$. The \emph{ancestor tree} of a
sequent $C$ is the tree whose nodes are all
sequents related to $C$ by $a(\cdot , \cdot)$ and whose
edges are defined by the relation $p(\cdot , \cdot)$ between such
nodes.
\end{definition}

We prove below that from any $\HJ +\mathbb{H}$ derivation ${\mathcal
D}$ of a sequent we can construct a partial derivation for each
component of $\widehat{H}_{\mathcal D}$ having the \emph{same
structure} as the ancestor tree of that component, i.e., consisting of the translation
of the rules in the ancestor tree, with the exception of $(EW)$.
\begin{remark}
\
\begin{itemize}
\item In an $\HJ + \mathbb{H}$ derivation that does not use $(EC)$,
the ancestor tree of each hypersequent is a
sequent derivation.
\item If $C$ is the active component of an application of $(EW)$, 
then there is no $C'$ such that $p(C, C')$.
\end{itemize}
\end{remark}
As usual, the \emph{length} of a derivation is the maximal number
of rule applications occurring on any branch plus $1$.

\begin{lemma} \label{lem:tr_H-S} 
Let $\mathbb{H}$ be a set of hypersequent rules and $\mathbb{S}$
of 2-systems s.t.\ if $\textit{Hr} \in \mathbb{H}$ then $\textit{Sys}_{\textit{Hr}} \in \mathbb{S}$.
Given any $\HJ +\mathbb{H}$ derivation ${\mathcal D}$ in structured form,
for each component $C$ of $\widehat{H}_{{\mathcal D}}$ 
we can construct a partial derivation in $\LJ +\mathbb{S}$  
having the same structure as the ancestor tree of $C$ in ${\mathcal D}$. 
\end{lemma}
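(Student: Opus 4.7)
The plan is to induct on the length of the subderivation of $\mathcal{D}$ lying above $\widehat{H}_{\mathcal{D}}$, building for each component $C$ of $\widehat{H}_{\mathcal{D}}$ a partial derivation $\mathcal{P}_C$ whose tree shape mirrors the ancestor tree of $C$. The recipe is local and rule-by-rule: an internal node of the ancestor tree coming from a one-premiss $\HJ$ rule $(r)$ (other than $(EW)$) acting on the component is translated into the $\LJ$ counterpart of $(r)$; a node coming from a hypersequent rule $\textit{Hr} \in \mathbb{H}$ acting on the component is translated into the top rule of $\textit{Sys}_{\textit{Hr}}$ corresponding to that component, taking as subderivations only those already obtained for the premisses linked to $C$ in the sense of Definition~\ref{def:linked}; a node where the ancestor tree branches because $C$ is a context component of a multi-premiss rule is translated into a \emph{dummy} bottom-rule-like step of the kind displayed in Example~\ref{ex:half_hyp-sys}, whose conclusion and premisses all coincide with $C$ and which merely collects the partial derivations coming from the different premisses.

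The base case is immediate: if $\mathcal{D}$ is a single axiom, then $\widehat{H}_{\mathcal{D}}$ has that axiom as its only component and the required partial derivation is the axiom itself. For the inductive step I examine the lowermost rule $(r)$ strictly above $\widehat{H}_{\mathcal{D}}$ and split on its type. A one-premiss $\HJ$ rule is straightforward: apply IH to the subderivation above to obtain partial derivations for all components of the premiss, then prepend the $\LJ$ version of $(r)$ to the partial derivation of its active component while leaving the partial derivations for context components unchanged. A hypersequent rule $\textit{Hr}$ is handled by applying IH to each premiss and, for each active component of the conclusion, stacking the corresponding top rule of $\textit{Sys}_{\textit{Hr}}$ on top of the partial derivations coming from the premisses linked to that component.

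The delicate case is a multi-premiss $\HJ$ rule $(r)$, for instance $(\ET r)$ or $(\IMPL l)$. The structured-form assumption tells me that $(r)$ is preceded, above each of its premisses, by a queue of $(EW)$ applications raising some $G_i \hh C_i$ to the shared $G \hh C_i$. For the active component of $(r)$ I simply apply the $\LJ$ version of $(r)$ to the IH-provided partial derivations for the active components of the premisses. For a context component $C$ of $G$ I merge, through a dummy bottom-rule step, the partial derivations coming from those premisses where $C$ already appears above the $(EW)$ queue (equivalently, those branches of the ancestor tree of $C$ that do not die at an $(EW)$-introduction). The structured-form requirement that every component of $G$ lie in at least one of the $G_i$ guarantees that at least one such branch exists, so no partial derivation has to be conjured out of nothing; $(EW)$ applications themselves contribute no nodes to the partial derivation.

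The main obstacle is precisely this multi-premiss bookkeeping. I have to verify that the dummy merge faithfully reproduces the shape of the ancestor tree of $C$ (up to the $(EW)$-pruned branches), and that the translation of an $\textit{Hr}$ application correctly selects, for each active component of its conclusion, exactly the linked premisses and the right top rule of $\textit{Sys}_{\textit{Hr}}$. The role of the preprocessing into structured form carried out in Section~\ref{sec:prepro} is exactly to make both of these operations deterministic and local, and hence to reduce the construction to a straightforward inductive translation.
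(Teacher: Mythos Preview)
Your proposal is correct and follows essentially the same approach as the paper: induction on the length of the $(EC)$-free part of $\mathcal{D}$, translating each rule locally into its $\LJ$ counterpart or into the appropriate top rule of $\textit{Sys}_{\textit{Hr}}$, and using a dummy bottom-rule step to merge the partial derivations of a context component across the premisses of a multi-premiss rule, with the structured-form hypothesis guaranteeing that the $(EW)$ queues never leave such a component without at least one surviving branch.

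One organisational point worth tightening: the $(EW)$-queue unwinding and the dummy merge for context components, which you spell out only for multi-premiss $\HJ$ rules, are needed just as much for multi-premiss rules $\textit{Hr}\in\mathbb{H}$, since these also carry a shared context $G$ across their premisses. The paper in fact takes the multi-premiss $\textit{Hr}$ case as the paradigm (case~(ii) of the proof) and remarks that multi-premiss $\HJ$ rules and one-premiss $\textit{Hr}$ rules are the simpler instances. Your brief sentence on the $\textit{Hr}$ case handles only the active components of the conclusion; you should state that the context components of $G$ in that case are dealt with by exactly the same dummy-merge mechanism you describe in your ``delicate case''.
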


\begin{proof} 
Let $H$ be a hypersequent in ${\mathcal D}$ derived without using $(EC)$. 
We construct a partial derivation in $\LJ +\mathbb{S}$ with the required 
property for each of its components.
The proof proceeds by induction on the length $l$ of the derivation of $H$ by 
translating each rule of $\HJ +\mathbb{H}$, with the exception of $(EW)$,
into the corresponding sequent rule in $\LJ +\mathbb{S}$.

\noindent \textbf{\textit{Base case.}} If $l = 1$ (i.e.\ $H$ is
an axiom) the partial derivation in $\LJ +\mathbb{S}$ simply contains $H$.

\noindent \textbf{\textit{Inductive step.}} 
We consider the last rule $(r) \not = (EW)$ applied in the
subderivation ${\mathcal D}'$ of $H$, and we distinguish the two cases:
$(i)$ $(r)$ is a one-premiss rule and $(ii)$ $(r)$ has more premisses; for
the latter case, since ${\mathcal D}'$ is in structured form, we deal also
with possible queues of $(EW)$ above its premisses.

\begin{enumerate}
\item Assume that the derivation ending in a one-premiss rule $(r) \in
\HJ$ is
  \[\infer[(r)] {G \hh C'}{\infer*{G \hh C}{{\mathcal D}}}
  \] 
By induction hypothesis there is a partial derivation of $C$ (and of each
component of $G$) having the same structure as the ancestor tree of $C$.
The partial derivation of $C'$ is simply obtained by applying $(r)$.

The case in which $(r)$ is a one-premiss rule belonging to $\mathbb{H}$ is 
a special case of $(ii)$ for which there is no need to consider queues of $(EW)$.
\item Assume that \label{case:hr} $(r)= (\textit{Hr}) \in \mathbb{H} $ has more than one
  premiss, the remaining cases
-- $(r) \in \HJ$, and $(r) \in \mathbb{H}$ and has only one premiss -- being simpler.
 Assume that the derivation ${\mathcal D}'$, of length $n$, is the following
  \[\infer[(\textit{Hr})] {G \hh \Delta_{1}, \Gamma _{1} \ERGO \Pi _{1} \hh \dots
      \hh \Delta _{k}, \Gamma _{k} \ERGO \Pi _{k}}{ \infer*{G
        \hh C ^{1}_{1}}{{\mathcal D}^{1}_{1}} \quad \dots \quad
      \infer*{G \hh C_{1}^{m_{1}}}{{\mathcal D}_{1}^{m_{1}}} \quad
      \dots \quad \infer*{G \hh C_{k}^{1}}{{\mathcal D}_{k}^{1}}
      \quad \dots \quad \infer*{G \hh C_{k}^{m_{k}}}{{\mathcal
          D}_{k}^{m_{k}}}}
  \]
where the premisses $G \hh C^i_j$ of $(\textit{Hr})$ are possibly inferred by a queue of $(EW)$.
When this is the case, we consider the uppermost hypersequents in the queues.
More precisely, we consider the following derivations (each of which has  
length strictly less than $n$) 
  \[
    \infer*{G^{1}_{1} \hh C ^{1}_{1}}{{\mathcal D}^{1}_{1}} \quad
    \dots \quad \infer*{G_{1}^{m_{1}} \hh C_{1}^{m_{1}}}{{\mathcal
        D}_{1}^{m_{1}}} \quad \dots \quad \infer*{G_{k}^{1} \hh
      C_{k}^{1}}{{\mathcal D}_{k}^{1}} \quad \dots \quad
    \infer*{G_{k}^{m_{k}} \hh C_{k}^{m_{k}}}{{\mathcal
        D}_{k}^{m_{k}}}
  \] 
where, for $1 \leq y \leq k$ and $1
\leq x \leq m_{y}$, the hypersequent $G^{x}_{y}$
is $G$ if there is no $(EW)$
application immediately above $G \hh C_{y}^{x}$; otherwise, $G_{y}^{x} \hh
C_{y}^{x}$ is the premiss of the uppermost $(EW)$ application in the
queue immediately above $G \hh C_{y}^{x}$. 

Since ${\mathcal D}$ (and hence ${\mathcal D}'$) is in structured form, each
component of $G$ must occur in at least one of the hypersequents
$G^{1}_{1} , \dots , G_{1}^{m_{1}} , \dots , G_{k}^{1} , \dots ,
G_{k}^{m_{k}} $.  
We obtain partial
derivations for $ \Delta_{1}, \Gamma _{1} \ERGO \Pi _{1} , \dots ,
 \Delta _{k}, \Gamma _{k} \ERGO \Pi _{k}$ applying the top
rules of the 2-system $\textit{Sys}_{\textit{Hr}}$ as follows
\[\infer[(r_{1})]{\Delta _{1} , \Gamma _{1} \ERGO \Pi _{1}}{C_{1}^{1}
\quad \dots \quad C_{1}^{m_{1}}} \qquad \dots \qquad
\infer[(r_{k})]{\Delta _{k}, \Gamma _{k} \ERGO \Pi _{k}}{C_{k}^{1}
\quad \dots \quad C_{k}^{m_{k}}}\]
Indeed, by induction hypothesis, we have a partial derivation for each $C^{x}_{y}$. 
In case a component $C$ of $G$ occurs in more than one premiss, we
have different partial derivations. Hence we apply a dummy bottom rule
\[\infer[] {C} { C & \dots & C}\] 
and obtain one partial derivation.
\end{enumerate}
The obtained partial derivations clearly satisfy the following
property: with the exception of $(EW)$ and of dummy bottom rules, a
rule application occurs in the ancestor tree of a hypersequent
component in ${\mathcal D}$ \emph{iff} its translation occurs in the
partial derivation of such component.
\end{proof}

The next step of the translation consists in applying a bottom rule
for each group of top rules translating one hypersequent rule
application. If we applied dummy bottom rules inside the partial derivations,
we might be forced to apply a single bottom rule for more than one of
such groups -- thus creating what will be called a \emph{mixed
system}. In Theorem~\ref{thm:hyp-sys} we prove that we can always
restructure the derivation and obtain the desired exact match between
groups of top rules and bottom rules. We first show an
example that clarifies the main ideas exploited
in the following proof.
\begin{example}\label{ex:context_problem}
Consider the partial derivations obtained in
Ex.~\ref{ex:half_hyp-sys}, if we apply a bottom rule to them we obtain
the following derivation:
\[
\infer[(\COM _{B})]{\ERGO (\varphi \ET \psi \IMPL \theta) \VEL (\theta
\IMPL \psi \ET \varphi)}{\infer{\ERGO (\varphi \ET \psi \IMPL \theta) \VEL (\theta
\IMPL \psi \ET \varphi)}{\infer{\ERGO \varphi \ET \psi \IMPL
\theta}{\infer{\varphi \ET \psi \ERGO
\theta}{ \infer[\textit{dummy}]{\varphi , \psi \ERGO
\theta}{ \infer[(\COM _{1})']{\varphi , \psi \ERGO
\theta}{\infer{\varphi , \theta \ERGO
\theta}{\theta \ERGO
\theta}}& \infer[(\COM _{1})'']{\varphi , \psi \ERGO
\theta}{\infer{\theta, \psi \ERGO \theta}{\theta \ERGO \theta}}}}}}
& \infer{\ERGO (\varphi \ET \psi \IMPL \theta) \VEL (\theta
\IMPL \psi \ET \varphi)}{\infer{\ERGO \theta \IMPL \psi \ET
\varphi}{ \infer[(\IMPL r)]{\theta \ERGO \psi \ET
\varphi}{\infer[(\COM _{2})'']{\theta \ERGO \varphi}{\varphi\ERGO
  \varphi} & \infer[(\COM _{2})']{\theta \ERGO \psi }{\psi \ERGO
\psi}}}} }
\] where $(\COM _{B})$ is the bottom rule both for $(\COM _{1})'$ and
$(\COM _{2})'$ and for $(\COM _{1})''$ and $(\COM _{2})''$. We call
this a mixed system.

We can abstract this derivation as
\[
\begin{tikzpicture} [grow'=up]
\Tree [.BOT($\COM ',\COM ''$) 
[.$\bigcirc$ [.$\COM _{1}'$ ] [.$\COM _{1}''$ ]] 
[.$\bigtriangledown$ [.$\COM _{2}''$ ] [.$\COM _{2}'$ ]] ]
\end{tikzpicture}
\]
where we represent by BOT($\COM ',\COM ''$) the bottom rule of $\COM
'$ and $\COM ''$, by $\bigcirc$ the forks in the derivation tree
corresponding to dummy bottom rules, and by $\bigtriangledown$ the forks
corresponding to non-dummy rules.

Given that the removal of premisses from the $\bigcirc$ forks is
a logically sound operation, we transform the structure
of the derivation as follows:
\[
\begin{tikzpicture} [grow'=up]
\Tree [.BOT($\COM '$)  
[.$\bigcirc$ [.$\COM _{1}'$ ]] 
[.BOT($\COM ''$) 
[.$\bigcirc$ [.$\COM _{1}''$ ]] 
[.$\bigtriangledown$ [.$\COM _{2}''$ ] [.$\COM _{2}'$ ]] ]
]
\end{tikzpicture}
\] Now the group of top rules translating $\COM '$ and the one
translating $\COM ''$ have different bottom rules. The
derivation resulting from this is the following
  \[ \infer[(\COM _{B})']{\ERGO \alpha}{ \infer{\ERGO \alpha}{
\infer{\ERGO \varphi \ET \psi \IMPL \theta}{\infer{\varphi \ET \psi
\ERGO \theta} { \infer[(\COM _{1})']{\varphi , \psi \ERGO
\theta}{\infer{\varphi , \theta \ERGO \theta}{\theta \ERGO \theta}} }}
}& \infer[(\COM _{B})'']{\ERGO \alpha}{ \infer{\ERGO \alpha}{
\infer{\ERGO \varphi \ET \psi \IMPL \theta}{\infer{\varphi \ET \psi
\ERGO \theta} { \infer[(\COM _{1})'']{\varphi , \psi \ERGO
\theta}{\infer{\theta, \psi \ERGO \theta}{\theta \ERGO \theta}} }}} &
\infer{\ERGO \alpha}{\infer{\ERGO \theta \IMPL \psi \ET \varphi}{
\infer{\theta \ERGO \psi \ET \varphi}{\infer[(\COM _{2})'']{\theta
\ERGO \varphi}{\varphi\ERGO \varphi} & \infer[(\COM _{2})']{\theta
\ERGO \psi }{\psi \ERGO \psi}}}} }}
  \] where $\alpha$ is the formula $ (\varphi \ET \psi \IMPL \theta)
\VEL (\theta \IMPL \psi \ET \varphi)$.
\end{example}

\begin{theorem} \label{thm:hyp-sys} For any set $\mathbb{H}$ of
hypersequent rules and set $\mathbb{S}$ of 2-systems s.t.\ if $\textit{Hr}
\in \mathbb{H}$ then $\textit{Sys}_{\textit{Hr}} \in \mathbb{S}$, if $\DER_{\HJ
+\mathbb{H}} \Gamma \ERGO \Pi$ then $\DER_{\LJ +\mathbb{S}} \Gamma
\ERGO \Pi$.
\end{theorem}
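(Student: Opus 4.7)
The plan is to start with the derivation $\mathcal{D}$ in $\HJ + \mathbb{H}$ of $\Gamma \ERGO \Pi$, first transform it into structured form (as promised by Section~\ref{sec:prepro}), and then apply Lemma~\ref{lem:tr_H-S} to obtain, for each component $C$ of $\widehat{H}_{\mathcal{D}}$, a partial derivation in $\LJ + \mathbb{S}$ whose tree structure matches the ancestor tree of $C$. Since $\widehat{H}_{\mathcal{D}}$ is the premiss of a queue of $(EC)$ applications whose conclusion is $\Gamma \ERGO \Pi$, every component of $\widehat{H}_{\mathcal{D}}$ is precisely $\Gamma \ERGO \Pi$, so all partial derivations end in the target sequent.

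Next I would glue these partial derivations together by supplying bottom rules for the top rules they contain. For every instance of a hypersequent rule $\textit{Hr} \in \mathbb{H}$ used in $\mathcal{D}$, Lemma~\ref{lem:tr_H-S} has introduced a group of top rules of $\textit{Sys}_{\textit{Hr}}$, one per active component of the conclusion of $\textit{Hr}$. I would pair each such group with a single application of the bottom rule $(r_B)$ of $\textit{Sys}_{\textit{Hr}}$, placed below the partial derivations carrying those top rules, so that the applicability conditions of Def.~\ref{def:2systems} are satisfied.

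The main obstacle is exactly the phenomenon highlighted in Example~\ref{ex:context_problem}: the dummy bottom rules introduced in case~\ref{case:hr} of Lemma~\ref{lem:tr_H-S} to unify the several partial derivations of a shared context component may force a single $(r_B)$ to act as bottom rule for two distinct top-rule groups, producing a \emph{mixed system} that violates Def.~\ref{def:2systems}. To resolve this I would perform the restructuring step sketched in the example: whenever a candidate $(r_B)$ would serve more than one group, keep $(r_B)$ associated with one group and push a fresh copy of $(r_B)$ (or the appropriate bottom rule) above the other group, separating the two systems along a dummy fork. Since a dummy bottom rule has identical premisses and conclusion, pruning it to a single premiss is sound, which is what legitimises moving one group of top rules above the other bottom rule.

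It then remains to argue that this disentangling terminates and produces a bona fide $\LJ + \mathbb{S}$ derivation: at each step one mixed bottom rule is replaced by two non-mixed ones, while the number of hypersequent-rule instances originally present in $\mathcal{D}$ bounds the total number of bottom rules to be installed, so the process must halt. The result is a derivation in $\LJ + \mathbb{S}$ of the same end-sequent, giving $\DER_{\LJ + \mathbb{S}} \Gamma \ERGO \Pi$ as required.
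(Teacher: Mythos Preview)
Your overall strategy coincides with the paper's: reduce to structured form, invoke Lemma~\ref{lem:tr_H-S} to obtain partial derivations of $\Gamma\ERGO\Pi$, then supply bottom rules and untangle the mixed systems created by dummy forks. The identification of the obstacle and the idea of pruning dummy bottom rules are both correct.

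There is, however, a genuine gap in the restructuring step. Your claim that ``at each step one mixed bottom rule is replaced by two non-mixed ones'' is not true in general: a single mixed system may involve $n>2$ groups of top rules, and after one split the resulting systems can still be mixed. The paper's procedure instead processes the groups one index at a time, nesting a fresh bottom rule $(b^{j})$ for each subsequent group inside the branches that still contain its top rules, and this may duplicate subderivations. Termination is then by exhaustion of the finitely many groups, not by a one-step un-mixing.

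More seriously, you assume the separation can always be carried out ``along a dummy fork'', but you do not argue why a dummy fork is always available. In Example~\ref{ex:context_problem} the two $(\COM_2)$ applications sit above different premisses of a \emph{non-dummy} $(\wedge r)$, which cannot be pruned; the separation only succeeds because the companion $(\COM_1)$ applications sit above a dummy fork. The paper isolates the structural fact that makes this work (its observation~(3)): whenever two groups of top rules occur above different premisses of a non-dummy rule in one partial derivation, every other pair from the same two groups occurs above different premisses of dummy rules. This is what guarantees that, after nesting the bottom rules, no top rule is stranded above the wrong premiss of its own bottom rule; the paper closes with a short contradiction argument using (3) to establish exactly this. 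Without this observation (or an equivalent), your restructuring step is unjustified.
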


\begin{proof} 
Let ${\mathcal D}$ be a $\HJ +\mathbb{H}$ derivation of $\Gamma \ERGO
\Pi$. By the results in Section~\ref{sec:prepro} we can assume that
${\mathcal D}$ is in structured form.  By applying the procedure of
Lemma~\ref{lem:tr_H-S} to the premiss $\widehat{H}_{{\mathcal D}}$ of the
uppermost application of $(EC)$ in ${\mathcal D}$ we obtain a set of
partial derivations $\{{\mathcal D}_i\}_{i \in I}$ whose rules translate
those occurring in the ancestor trees of each component of
$\widehat{H}_{{\mathcal D}}$.

We show that we can suitably apply the bottom rules of 2-systems in
$\mathbb{S}$ to the roots of $\{{\mathcal D}_i\}_{i \in I}$ in order to
obtain the required $\LJ +\mathbb{S}$ derivation of $\Gamma \ERGO
\Pi$.  First, we group all top rule applications in $\{{\mathcal D}_i\}_{i
\in I}$ according to the application of $\textit{Hr} \in \mathbb{H}$ that these
rules translate. For each such group we apply one bottom rule below
the partial derivations in which the top rules of the group occur.
As shown in Example~\ref{ex:context_problem}, due to the duplication of context sequents in
hypersequent rules (that we handle using dummy bottom rules), we may need
to apply a single bottom rule below groups of top rules translating
different hypersequent rules. In particular, this happens when a
hypersequent rule application $(r)$ with more than one premiss has an
active component $C_{0}$ and some context components $C_{1},
\dots,C_{n}$ in the conclusion, and two hypersequent rule applications $(h')$ and $(h'')$ have active
components including different ancestors
of some $C_{i}$ with $0 \leq i \leq n$. In this case, the top rules translating
$(h')$ and $(h'')$ occur above different premisses of a non-dummy rule
with conclusion $C_{0}$ (just like the two applications
of $(\COM _{2})$ in Example~\ref{ex:context_problem}) and of some dummy bottom rules with
conclusions $C_{1}, \dots,C_{n}$ (just like the two applications
of $(\COM _{1})$ in Example~\ref{ex:context_problem}). When we apply a bottom rule for such
a group of top rules we obtain a \emph{mixed 2-system}, i.e.\ a
2-system that contains more than one group of top rules translating
different hypersequent rule applications.

We show that we can replace each mixed 2-system by regular
2-systems. First notice that
\begin{enumerate}
\item \label{dot:one_level} two top rule applications belonging to the same mixed
  2-system cannot occur on the same path of the derivation tree,
\item \label{dot:dummy_split} if we remove all premisses but one from a dummy bottom rule in
  a partial derivation we still obtain a partial derivation,
\item \label{dot:dummies_for_non-dummy} every time a pair of top rules
translating different hypersequent rule applications occur in the same
mixed 2-system above different premisses of a non-dummy rule, all other
 pairs of top rules translating these two
hypersequent rule applications occur above different premisses of
dummy bottom rules.
\end{enumerate}
From \eqref{dot:one_level} and \eqref{dot:dummy_split} it follows that if
two top rules occur above different premisses of a dummy bottom rule,
we can remove one of them from the partial derivation containing the
other. If we do so, we say that we \emph{split} the dummy bottom rule.

Consider now a mixed 2-system
\[\infer{\Gamma \ERGO \Delta}{\infer*{\Gamma \ERGO
\Delta}{\mathcal{D}_{1}} & \dots &
\infer*{\Gamma \ERGO \Delta}{\mathcal{D}_{k}}}\] where the derivation
$\mathcal{D}_{i}$, for $1 \leq i \leq k$, contains the
rule applications $(r^{1}_{i}) , \dots , (r^{n}_{i})$. We adopt the
convention that the rules with same superscript index translate the same
hypersequent rule.

To replace such mixed 2-system with regular
2-systems we proceed as follows. First we replace the mixed 2-system with a 2-system for the
group of top rules with superscript 1:
\[\infer[(b^{1})]{\Gamma \ERGO \Delta}{\infer*{\Gamma \ERGO
\Delta}{\mathcal{D}_{1}'} & \dots & \infer*{\Gamma \ERGO
\Delta}{\mathcal{D}_{k}'}}\] where $\mathcal{D}_{1}' , \dots ,
\mathcal{D}_{k}'$ only contain the rules $(r^{1}_{1}) , \dots ,
(r^{1}_{k})$ and those top rules that cannot be removed from the
partial derivations by splitting dummy bottom rules (if we need to
choose, we pick the top rules with minimum superscript index). After
this, we introduce further bottom rules as follows
  \[ \infer[(b^{1})]{\Gamma \ERGO \Delta}{\infer[(b^{2})]{\Gamma \ERGO
\Delta}{\infer*{\Gamma \ERGO \Delta}{\mathcal{D}_{1}'} &
\infer*{\Gamma \ERGO \Delta}{\mathcal{D}_{2}''}& \dots &
\infer*{\Gamma \ERGO \Delta}{\mathcal{D}_{k}''}} & \dots &
\infer[(b^{2})]{\Gamma \ERGO \Delta}{\infer*{\Gamma \ERGO
\Delta}{\mathcal{D}_{1}''} & \dots & \infer*{\Gamma \ERGO
\Delta}{\mathcal{D}_{k-1}''} & \infer*{\Gamma \ERGO
\Delta}{\mathcal{D}_{k}'}}}
\] where the bottom rules $(b^{2})$ are only introduced below the
branches $\mathcal{D}_{1}' , \dots , \mathcal{D}_{k}'$ containing some
of the rules $(r^{2}_{1}) , \dots , (r^{2}_{k})$, and the derivations
$\mathcal{D}_{1}'' , \dots , \mathcal{D}_{k}''$ are copies of
$\mathcal{D}_{1} , \dots , \mathcal{D}_{k}$ only containing
$(r^{2}_{1}) , \dots , (r^{2}_{k})$ and those top rules that cannot be
removed by splitting dummy bottom rules.  We keep duplicating the
derivation in such way until either we do not need any more bottom
rules or we introduced bottom rules for all superscript indices $1,
\dots ,n$. Given that we can add bottom rules for all groups of top
rules in the mixed 2-system, in order to be sure that the result does
not contain any mixed 2-system we only need to show that we never add
a top rule application above the wrong premiss of its bottom rule.
For the sake of contradiction suppose that we do. We add a top rule
application $(r^{i}_{p})$ above a wrong premiss of its bottom rule
only if we just introduced a new bottom rule $(b^{j})$, for $ i < j
\leq n$, and we cannot remove $(r^{i}_{p})$ -- by splitting a dummy
bottom rule -- from the derivation containing a top rule $(r^{j}_{p})$
that we need in the branch that we are considering. But if we cannot
remove $(r_{p}^{i})$ from the partial derivation containing
$(r^{j}_{p})$, by~\eqref{dot:dummies_for_non-dummy} we can remove any
$(r^{j}_{q})$ from any partial derivation containing any
$(r^{i}_{q})$, as long as $q \neq p$. Given that the bottom rule
$(b^{i})$ occurs below $(b^{j})$, it follows that there is no top rule
$(r^{j}_{q})$ on this branch of the bottom rule
$(b^{i})$. By~\eqref{dot:one_level} we can rule out the involvement of
2-system instances different from $i$ and $j$, and hence we can infer
that $(r^{j}_{p})$ is not needed and we do not need to add
$(r^{i}_{p})$ in the first place, contrarily to the assumptions.

Notice that the procedure does not require all groups of top rules to
have exactly $k$ elements. If, for example, the group with superscript
index $i$ contains $l$ top rule applications for $l < k$, then the
bottom rules for $i$ will have $l$ premisses. This does not influence any other group of top rules.

Thus, we eventually obtain an $\LJ +\mathbb{S}$ derivation of $\Gamma
\Rightarrow \Pi$.
\end{proof}

\subsubsection{Normal forms of hypersequent derivations}
\label{sec:prepro}

In the previous algorithm we only considered hypersequent derivations
in structured form, i.e.\ in which $(EC)$ applications occur immediately
above the root and $(EW)$ applications occur where needed.  Here we show
how to transform each hypersequent derivation into a derivation in
structured form.

\begin{definition} The {\em external contraction rank (ec-rank)} of an
application $E$ of $\mathrm{(EC)}$ in a derivation is the
number of applications of rules other than $\mathrm{(EC)}$
between $E$ and the root of the derivation.
\end{definition}

\begin{lemma}
\label{lem:EC} Each  $\mathrm{\HJ }+
\mathbb{H}$  derivation ${\mathcal D}$ can be transformed into a derivation of the same
end-hypersequent in which all $\mathrm{(EC)}$ applications have
ec-rank $0$.
\end{lemma}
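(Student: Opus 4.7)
The plan is to iteratively permute $(EC)$ applications past the rule immediately beneath them until every $(EC)$ has ec-rank $0$. Let $E$ be an $(EC)$ application with positive ec-rank and let $(r)\neq(EC)$ be the rule immediately below $E$. The permutation splits into two cases according to which component of the premiss of $E$ is duplicated.

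In the first case the component contracted by $E$ is a context component of $(r)$. Using the context-sharing assumption~(i) on rules in $\mathbb{H}$, we add the contracted component to the other premisses of $(r)$ via $(EW)$, apply $(r)$ with the enlarged context, and conclude with $(EC)$. No subderivation is duplicated, so only the ec-rank of $E$ changes and it drops by one.

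In the second case the contracted component coincides with the active component $S_{1}$ of the premiss of $(r)$ derived by $E$. Here the permutation requires applying $(r)$ twice: first with the extra $S_{1}$ treated as part of the context, producing $G \hh S_{1} \hh S_{0}$, and then a second time to obtain $G \hh S_{0} \hh S_{0}$, so that a final $(EC)$ can collapse the two copies of $S_{0}$; the remaining premisses of both applications of $(r)$ are adjusted by $(EW)$, which is legal thanks to assumptions~(i) and~(ii). This transformation duplicates the sibling subderivations of $(r)$, and any $(EC)$ they contain is copied: the copy placed above the inner $(r)$ gains one ec-rank while the copy above the outer $(r)$ keeps its original ec-rank.

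The main obstacle is the termination of the iteration in presence of the second case, since the duplication can in principle raise the maximum ec-rank occurring in the derivation. The remedy is a disciplined choice of which $(EC)$ to permute: one first exhausts the permutations falling under the first case, which always strictly decrease the measure without copying any $(EC)$; when only the second case is available, one selects $E$ whose sibling subderivations of $(r)$ are already free of $(EC)$ applications of equal or higher rank, which can always be arranged by first recursively applying the same procedure to those strictly smaller sibling derivations. Under this strategy the lexicographic measure on the pair (maximum ec-rank, number of $(EC)$ applications at that rank) strictly decreases at each step, the procedure terminates, and the resulting derivation has all $(EC)$ applications at ec-rank $0$.
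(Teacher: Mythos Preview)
Your local permutation steps are essentially right, but the termination argument has a genuine gap that your ``disciplined choice'' does not close. The issue is that \emph{both} Case~1 and Case~2 insert $(EW)$ applications above the sibling premisses of $(r)$ in order to match contexts, and $(EW)$ is a non-$(EC)$ rule: any $(EC)$ sitting in a sibling subderivation therefore has its ec-rank raised by those $(EW)$s (and, in Case~2, additionally by the extra copy of $(r)$). So your claim in Case~1 that ``only the ec-rank of $E$ changes'' is false as soon as a sibling contains an $(EC)$. Your remedy --- recursively clean the siblings first --- is circular: if two premisses of the \emph{same} application of $(r)$ each carry an $(EC)$ of maximal rank contracting the respective active component, then each is the other's sibling and neither can be processed first. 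Concretely, take any two-premiss rule with an $(EC)$ queue above each premiss; both queues have maximal ec-rank $\mu$, and there is no order in which your one-$(EC)$-at-a-time scheme decreases $\langle\mu,\nu\rangle$.

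The paper avoids this by a different manoeuvre: it still works with an $(EC)$ of maximal rank $\mu$, but observes that maximality forces every $(EC)$ above the chosen $(r)$ to lie in a queue directly above one of the premisses, so that the subderivations $\mathcal{D}_i$ above those queues are $(EC)$-free. It then treats \emph{all} premisses simultaneously via a separate combinatorial lemma (Lemma~\ref{lem:downwards}): from the raw hypersequents $G\hh G'_i\hh (C_i)^{m_i}$ one can derive $G\hh G''\hh (H)^{q}$ with $q=\sum_i(m_i-1)+1$ using only $(EW)$ and $(r)$, after which a single queue of $(EC)$s yields $G\hh H$. Because the inserted $(EW)$s and $(r)$s now lie only above $(EC)$-free derivations, no ec-rank is raised and the lexicographic measure genuinely drops. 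This simultaneous handling of all contraction queues above $(r)$ is the missing idea in your proposal.
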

\begin{proof} Proceed by double induction on the lexicographically
ordered pair $\langle \mu , \nu\rangle$, where $\mu$ is the maximum
ec-rank of any $\mathrm{(EC)}$ application in ${\mathcal D}$, and
$\nu$ is the number of $\mathrm{(EC)}$ applications in ${\mathcal D}$ with
maximum ec-rank.

\noindent \textbf{\textit{Base case.}} If $\mu = 0$ the claim
trivially holds.

\noindent \textbf{\textit{Inductive step.}} Assume that ${\mathcal D}$ has
maximum ec-rank $\mu$ and that there are $\nu$ applications
of the rule $\mathrm{(EC)}$ with ec-rank $\mu$.  We show how
to transform ${\mathcal D}$ into a derivation ${\mathcal D}'$ having either
maximum ec-rank $\mu ' < \mu$ or ec-rank $\mu $
and number of $(EC)$ applications with maximum ec-rank $\nu '
< \nu$.

Consider an $\mathrm{(EC)}$ application with ec-rank $\mu$
in ${\mathcal D}$ and the queue of $\mathrm{(EC)}$ containing it. There cannot be
any applications of $(EC)$ above this queue because the
ec-rank of its elements is maximal. We distinguish cases
according to the rule $(r)$ applied to the conclusion of the last
element of such queue.

Assume that $(r)$ has one premiss. If
$(r) = (EW)$, we apply $(EW)$ (with the same active
component) before the queue. If $(r) \neq (EW)$, we apply
$(r)$ immediately  before the queue, possibly followed by applications of $(EC)$.

\medskip

\noindent \emph{Notation}. Given a hypersequent $H$ we denote
by $(H)^{u}$ the hypersequent $H \hh \dots \hh H$ containing $u$ copies of $H$ ($u \geq 0$).

\medskip

Let $(r)$ be a(ny external) context-sharing rule with more than one premiss and
consider any subderivation of  ${\mathcal D}$  of the form 
\[ \infer[(r)]{G \hh H }{ \infer[(EC)]{G \hh
C_{1}}{\infer[(EC)]{\vdots}{\infer*{G \hh G'_{1} \hh
(C_{1})^{m_{1}}}{\mathcal{D}_{1}}}} \quad \dots \quad \infer[(EC)]{G
\hh C_{n}}{\infer[(EC)]{\vdots}{\infer*{G \hh G'_{n} \hh
(C_{n})^{m_{n}}}{\mathcal{D}_{n}}}} }
\] 
where $G'_{i}$, for $1 \leq i \leq n$, only contains components in
 $G$ and the derivations $\mathcal{D}_{1},
\dots , \mathcal{D}_{n}$ contain no application of $(EC)$.
We can transform ${\mathcal D}$ into a derivation ${\mathcal D}'$ in which all
applications of $(EC)$ occurring above the hypersequent $G \hh H$ are
either immediately above it or immediately above another application
of $(EC)$; their ec-rank is reduced by $1$ because $(r)$ does not
occur below them anymore.

We first prove that $(\star)$ the hypersequent $G \hh G'' \hh
(H) ^{q}$, where $G'' = G'_{1} \hh \dots \hh G'_{n} $ and $q = (\sum
_{i=1}^{n} (m_{i}-1))+1$ is derivable from \[G \hh
G'_{1} \hh (C_{1})^{m_{1}} \;, \; \dots \; , \; G \hh G'_{n} \hh
(C_{n})^{m_{n}} \] using only $(EW)$ and $(r)$. The hypersequent $G
\hh H$ then follows from $G \hh G'' \hh (H) ^{q}$ by $(EC)$ as all the
components of $G''$ occur also in $G$.  The obtained derivation ${\mathcal
D}'$ has maximum ec-rank $\mu '< \mu $, or the occurrences of $(EC)$
with ec-rank $\mu $ occurring in it are $\nu ' < \nu$.

It remains to prove claim $(\star)$.
We have a derivation of any element of the set
\[ \mathbb{Q} = \lbrace G \hh G'' \hh (H) ^{0} \hh (C_{1})^{x_{1}}
\hh \dots \hh (C_{n})^{x_{n}} \; : \; \sum _{i=1}^{n} x_{i} \; = \;
(\sum _{i=1}^{n} (m_{i}-1))+1 \rbrace
\]
 from the hypersequents 
$G \hh G'_{1} \hh
(C_{1})^{m_{1}} \;, \; \dots \; , \; G \hh G'_{n} \hh
(C_{n})^{m_{n}} $ using only $(EW)$.
Indeed for any hypersequent in $\mathbb{Q}$ and for $1 \leq i \leq
n$, there is at least one $x_{i} \geq m_{i}$, because otherwise $\sum
_{i=1}^{n} x_{i} \; < \; (\sum_{i=1}^{n} (m_{i}-1))+1 $. The claim $(\star)$
therefore follows by Lemma~\ref{lem:downwards} below 
being $G \hh G'' \hh (H) ^{q}$ the only
element of the set
\[ \mathbb{Q}' = \lbrace G \hh G'' \hh (H) ^{q} \hh (C_{1})^{x_{1}}
\hh \dots \hh (C_{n})^{x_{n}} \; : \; \sum _{i=1}^{n} x_{i} \; = \;
0 \rbrace
\]for $q = (\sum _{i=1}^{n} (m_{i}-1))+1$.
\end{proof}

The following is the central lemma of the previous proof.
\begin{lemma}
\label{lem:downwards} For any application of a hypersequent rule
\[ \infer[(r)]{G \hh H }{ G \hh C_{1} && \dots && G \hh C_{n} }
\] and natural number $d \geq 0$, consider the set of hypersequents
\[ \mathbb{L}_{d} = \lbrace G \hh (H) ^{c} \hh (C_{1})^{x_{1}} \hh \dots \hh
(C_{n})^{x_{n}} \; : \; \sum _{i=1}^{n} x_{i} = d \rbrace
\] where $G , H$ are hypersequents, $C_{1} , \dots , C_{n}$ sequents,
and $c$ is a natural number. 
For any natural number $e$, s.t.\ $0 \leq e \leq d$, each element of the set
\[ \mathbb{L}_{(d-e)} = \lbrace G \hh (H) ^{c+e} \hh (C_{1})^{x_{1}'} \hh \dots
\hh (C_{n})^{x_{n}'} \; : \; \sum _{i=1}^{n} x_{i}' =
d-e\rbrace
\] is derivable from hypersequents in $\mathbb{L}_{d}$ by repeatedly applying the
rule $(r)$.
\end{lemma}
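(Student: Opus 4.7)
The plan is to prove the lemma by induction on $e$, using one application of $(r)$ per inductive step to descend from $\mathbb{L}_{d-e}$ to $\mathbb{L}_{d-e-1}$.

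For the base case $e = 0$, note that $\mathbb{L}_{d-0} = \mathbb{L}_{d}$, so every element is trivially derivable from itself using zero applications of $(r)$.

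For the inductive step, I will assume the claim for $e$ and fix an arbitrary element
\[ G \hh (H)^{c+e+1} \hh (C_{1})^{x_{1}'} \hh \dots \hh (C_{n})^{x_{n}'} \]
of $\mathbb{L}_{d-e-1}$, so that $\sum_{i=1}^{n} x_{i}' = d - e - 1$. I then apply the rule $(r)$ once, choosing as its shared external context
\[ G' \; := \; G \hh (H)^{c+e} \hh (C_{1})^{x_{1}'} \hh \dots \hh (C_{n})^{x_{n}'}, \]
so that the $i$-th premiss is $G' \hh C_{i}$ and the conclusion is $G' \hh H$; absorbing this new $H$ into the existing $(H)^{c+e}$ yields exactly the targeted element. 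The $i$-th premiss then has the form
\[ G \hh (H)^{c+e} \hh (C_{1})^{x_{1}'} \hh \dots \hh (C_{i})^{x_{i}'+1} \hh \dots \hh (C_{n})^{x_{n}'}, \]
whose multiplicities of $C_{1}, \dots, C_{n}$ sum to $(d-e-1)+1 = d - e$; hence this premiss belongs to $\mathbb{L}_{d-e}$. By the induction hypothesis, each of the $n$ premisses is derivable from hypersequents in $\mathbb{L}_{d}$ using only $(r)$, and stacking those subderivations above the fresh $(r)$-application gives the required derivation.

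The only point to check is that the rule application is legitimate: the external context-sharing convention recorded in the Preliminaries permits any hypersequent as shared context, and by construction $G'$ is identical across all $n$ premisses, while the active component in the $i$-th premiss is $C_{i}$ exactly as required by the schema of $(r)$. I do not anticipate any substantive obstacle; once the right decomposition is spotted --- strip one $H$ from the target hypersequent and read off the matching $n$ premisses in $\mathbb{L}_{d-e}$ --- the induction runs mechanically.
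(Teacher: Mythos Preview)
Your proof is correct and follows essentially the same approach as the paper: induction on $e$, with the inductive step applying $(r)$ once with shared context $G \hh (H)^{c+e} \hh (C_{1})^{x_{1}'} \hh \dots \hh (C_{n})^{x_{n}'}$ so that each premiss lies in the preceding level $\mathbb{L}_{d-e}$. The only cosmetic difference is that the paper indexes the step as going from $e-1$ to $e$ rather than from $e$ to $e+1$.
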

\begin{proof} By induction on $e$.

\noindent \textbf{\textit{Base case:}} If $e = 0$, then $\mathbb{L}_{d} =
\mathbb{L}_{d-e}$.

\noindent \textbf{\textit{Inductive step:}} Assume that $e > 0$ and that the claim
holds for all $e'<e$. By induction hypothesis there exists
a derivation from the hypersequents in $\mathbb{L}_{d}$ for each
element of the set
\[ \mathbb{L}_{(d-(e-1))} = \lbrace G \hh (H) ^{c+(e-1)} \hh (C_{1})^{x_{1}''} \hh
\dots \hh (C_{n})^{x_{n}''} \; : \; \sum _{i=1}^{n} x_{i}'' =
d-(e-1)\rbrace
\] that only consists of applications of $(r)$. Any hypersequent
\[G \hh (H) ^{c+e} \hh (C_{1})^{x_{1}'} \hh \dots \hh
(C_{n})^{x_{n}'}\] 
 in $\mathbb{L}_{(d-e)}$ can be derived from elements of $\mathbb{L}_{(d-(e-1))}$ as
follows:
\[ \infer[(r)]{G \hh (H) ^{c+e} \hh (C_{1})^{x_{1}'} \hh \dots \hh
(C_{n})^{x_{n}'}} { G \hh (H) ^{c+(e-1)} \hh H'_{1} \quad \dots \quad G
\hh (H) ^{c+(e-1)} \hh H'_{n} }
\] where, for $1 \leq i \leq n$, 
$H'_{i} \; = \;
(C_{1})^{y_{1}} \hh \dots \hh (C_{n})^{y_{n}}$ is such that if $j\neq i$
then $y_{j} = x'_{j}$ and if $j=i$ then $x'_{j} +1$; i.e.,
the components $ C_{1} , \dots , C_{n} \nin G$ occur in the
$i$\textsuperscript{th} premiss as many times as in the conclusion,
except for $C_{i}$ which occurs one more time.

All premisses of this rule application are hypersequents in
$\mathbb{L}_{(d-(e-1))}$, indeed \[(x_{1}'+1) +x_{2}' +\dots +x_{n}' \;
= \; \dots \; = \; x'_{1} +\dots +x'_{n-1} +(x'_{n}+1) \; = \; (\sum
_{i=1}^{n} x_{i}')+1 \] and \[(\sum _{i=1}^{n} x_{i}')+1 \; = \; (d-e)+1
\; = \; d-(e-1)
\]  Given that only the rule $(r)$ is used to
derive the elements of $\mathbb{L}_{d-(e-1)}$ from the elements of
$\mathbb{L}_{d}$, also the elements of $\mathbb{L}_{(d-e)}$ can be
derived from those of $\mathbb{L}_{d}$ by applying only $(r)$.
\end{proof}

\begin{lemma} \label{lem:push_down_ew} 
Any $\HJ +\mathbb{H}$ derivation of a sequent
can be transformed into a derivation in structured form.
\end{lemma}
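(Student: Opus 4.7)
The plan proceeds in two phases, one per clause of Definition~\ref{def:form}. First, I would invoke Lemma~\ref{lem:EC} to transform the given derivation so that every $(EC)$ application has ec-rank $0$; this produces the queue of $(EC)$ applications immediately above the root required by the first clause of structured form.

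For the second clause I would move each $(EW)$ application into a structured queue immediately above a premiss of some multi-premiss rule. The tool is permutation, enabled by the fact that every rule of $\HJ + \mathbb{H}$ is external-context-sharing: an $(EW)$ immediately above the single premiss of a one-premiss rule $(r)$ permutes below $(r)$ (the extra context component simply being carried through), and an $(EW)$ immediately below a multi-premiss rule $(r)$ is distributed to just above each premiss of $(r)$, landing in the structured position prescribed by Definition~\ref{def:form}. Iterating these moves drives every $(EW)$ past any intervening one-premiss rules until it either reaches a multi-premiss rule from below — so that it now occupies a structured position — or meets the bottom $(EC)$ queue produced by the first phase. Consecutive $(EW)$ applications along the same path collect naturally into a single queue, and the side condition that each component of $G$ is contained in at least one of $G_{1}, \dots, G_{n}$ holds automatically, because the components gathered into the queue are exactly the ones propagated upward through the subderivations of the multi-premiss rule.

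Termination follows by induction on a lexicographic measure, for instance the number of $(EW)$ applications not yet in structured position paired with the distance of each such $(EW)$ from the nearest multi-premiss rule below it; each permutation step strictly decreases this measure.

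The main obstacle I foresee is the treatment of $(EW)$ applications that, after all permutations, come to rest immediately above the bottom $(EC)$ queue with no intervening multi-premiss rule. Since the derivation ends in a single sequent, the extra component introduced by such an $(EW)$ must ultimately be contracted by the queue; consequently the $(EW)$ can be cancelled against a matching $(EC)$ occurrence in the queue, reducing the number of $(EW)$'s altogether. Making this cancellation explicit while preserving both the queue invariant from Lemma~\ref{lem:EC} and the context-matching side condition at every multi-premiss rule is the delicate step of the argument.
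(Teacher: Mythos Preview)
Your two-phase plan and the appeal to Lemma~\ref{lem:EC} match the paper, and permuting $(EW)$ downward through one-premiss rules is indeed the core manoeuvre. But the treatment of multi-premiss rules has a genuine gap.

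The claim that the side condition of Definition~\ref{def:form} ``holds automatically'' is false, and your upward-distribution step actively breaks it. If an $(EW)$ introducing $C$ lies immediately below a multi-premiss rule $(r)$ and you distribute it to one copy above each premiss, then $C$ is introduced by $(EW)$ in \emph{every} premiss queue of $(r)$; hence $C$ occurs in none of the $G_i$, and the side condition fails. The same failure can arise purely from downward permutation: several $(EW)$'s introducing the same component $C$ may independently be pushed down into the queues above \emph{all} premisses of some multi-premiss $(r)$. The paper handles precisely this situation by a case you do not have: when $C$ appears in every premiss queue, delete all those $(EW)$'s, apply $(r)$ with $C$ removed from the shared context, and reintroduce $C$ by a single $(EW)$ below $(r)$. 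In other words the paper permutes $(EW)$ \emph{only downward}---through multi-premiss rules as well---and never upward; without this move your procedure stalls at a configuration that is not in structured form, and your proposed termination measure does not decrease there.

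A second, smaller gap: your downward permutation past a one-premiss rule assumes the weakened component $C$ is carried through as context. If $C$ is instead the \emph{active} component of the next rule $(r)$, that permutation is unavailable; rather, $(r)$ is acting on a phantom component and its conclusion already follows from the $(EW)$-premiss $G$ by weakening alone, so $(r)$ is simply eliminated. This case already subsumes the $(EW)$--$(EC)$ cancellation you single out as the main obstacle, so that interaction is routine---the real difficulty is the side condition above.
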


\begin{proof} 
Let ${\mathcal D}$ be a hypersequent derivation of a sequent $S$ in $\HJ +\mathbb{H}$. By
Lemma \ref{lem:EC} we can assume that all applications of $(EC)$ 
in ${\mathcal D}$ occur in a queue immediately above $S$.  
Consider an application of $(EW)$, with premiss $G$ and conclusion $G \hh C$, which is not 
as in Definition~\ref{def:form}. First notice that $G \hh C$ cannot be the
root of ${\mathcal D}$. We show how to shift this application
of $(EW)$ below other rule applications until the statement is satisfied for such application.
Three cases can arise:
\begin{enumerate}
\item \label{dot:active_weakening} $C$ is the active component in the
premiss of an application of a rule $(r)$.  The conclusion of $(r)$ is
simply obtained by applying $(EW)$ (possibly multiple times) to $G$.

\item \label{dot:weakening2} $C$ is a context component in the premiss of an application of a
one-premiss rule $(r)$. The $(EW)$ is simply shifted below $(r)$.

\item \label{dot:weakening3} $C$ occurs actively inside the queues of $(EW)$ above all the
premisses of an application of a rule $(r)$.  We remove all
the applications of $(EW)$ with active component $C$ in the queues and
apply $(r)$ with one context component less, followed by $(EW)$.
\end{enumerate} 
The termination of the procedure follows from the fact that ${\mathcal D}$
is finite and that \eqref{dot:active_weakening}--\eqref{dot:weakening3} always reduce the number of rules
different from $(EW)$ occurring below the $(EW)$ applications.
\end{proof}

\section{Applications of the Embeddings}
\label{sec:future}
We provided constructive transformations from hypersequent
derivations to 2-system derivations and back. These transformations
show that the two seemingly different proof frameworks have 
the same expressive power.
The embeddings are not only interesting for
their conceptual outcomes, they also have applications that
are concretely beneficial to both 2-systems and hypersequents.

\subsection{For $2$-systems} 
\label{applications1}
The benefits of the embeddings with respect to $2$-systems include: $(i)$
new cut-free 2-systems, $(ii)$ analyticity proofs, and $(iii)$ locality of
derivations using the hypersequent notation.

$(i)$ and $(ii)$ rely on the method in \cite{Ciabattoni:2008fk} to
transform propositional Hilbert axioms in the language of Full
Lambek calculus into suitable hypersequent rules. 
In a nutshell, the method -- below described for the case of intermediate
logics -- is based on the following classification of 
intuitionistic formulae:
${\mathcal N}_{0}$ and ${\mathcal P}_{0}$ are the set of atomic formulae
\begin{center}
\begin{tabular}{rl}
${\mathcal P}_{n+1}$ & ::= $\FAL \hh \VER \hh {\mathcal N}_{n} \hh {\mathcal P}_{n+1} \ET {\mathcal
    P}_{n+1} \hh  {\mathcal P}_{n+1} \VEL {\mathcal P}_{n+1}$ \\
${\mathcal N}_{n+1}$  & ::= $\FAL \hh \VER \hh {\mathcal P}_{n} \hh {\mathcal N}_{n+1} \ET {\mathcal
    N}_{n+1} \hh  {\mathcal P}_{n+1} \IMPL {\mathcal
    N}_{n+1}$
\end{tabular}
\end{center}
\begin{remark}
The classes $\mathcal{P}_n$ and $\mathcal{N}_n$ contain axioms with leading positive and
negative connective, respectively. Recall that a connective is positive (negative) if its left (right)
logical rule is invertible \cite{Andreoli92}; note that in
$\HJ$, $\vee$ is positive, $\to$ is negative and $\wedge$ is both positive and negative.
\end{remark}

As shown in \cite{Ciabattoni:2008fk} all axioms within the 
class ${\mathcal P}_{3}$ can be algorithmically
transformed into equivalent {\em structural} hypersequent rules that are analytic, i.e.\ 
that preserve cut-elimination when added
to the calculus $\HJ$. For instance 
the rule $(\COM )$ in Example~\ref{ex:com} can be (automatedly\footnote{Program at https://www.logic.at/tinc/webaxiomcalc/})
extracted from the linearity axiom. Furthermore \cite{Ciabattoni:2008fk} shows how to
transform any structural hypersequent rule into an equivalent analytic rule.

Ad $(i)$: the method in
\cite{Negri:2014} rewrites generalised geometric formulae in the
class {\em GA}$_1$ into analytic 2-systems. 
Such formulae follow the schema 
\[\GA _1 \; \equiv \; \forall \overline{x} ( \bigwedge P \IMPL \exists
\overline{y}_1 \bigwedge \GA _0 \VEL \dots \VEL \exists \overline{y}_m \bigwedge \GA _0)\]
Here $\overline{x}, \overline{y}_1, \dots , \overline{y}_m$ are tuples of first order variables, $\bigwedge
P$ is a finite conjunction of atomic formulae, the
variables in $\overline{y}_i$ for any $i$ do not occur free in $\bigwedge P$, and
$\bigwedge \GA _0$ is a finite conjunction of formulae of the form
\mbox{$\forall \overline{x} ( \bigwedge P \IMPL \exists \overline{y}_1
\bigwedge P _1 \VEL \dots \VEL \exists \overline{y}_m \bigwedge P _m)$} 
where the same conditions apply, and $\bigwedge P _j$ is a conjunction
of atomic formulae for any $j$.
As observed in~\cite{Negri:2014}, formulas in $\GA_1$ need not contain quantifier
alternations; indeed there are purely propositional axioms that are in
$\GA_1$ but not in $\GA_0$. Notice that the propositional axioms in
$\GA_1$ are strictly contained in the
class ${\mathcal P}_{3}$ of \cite{Ciabattoni:2008fk}.
For the strictness of the inclusion, consider the axiom
 $\neg \alpha \vee \neg \neg \alpha$.
If we write, as usual, $\neg \varphi$ as $\varphi \IMPL \FAL$, this axiom belongs to $\mathcal{P}_3$ but {\em not}
to {\em GA}$_1$. Hence when applied to $\neg \alpha \vee
\neg \neg \alpha$ the method in~\cite{Negri:2014} does not lead to a
2-system, which can instead be defined
by translating the hypersequent rule equivalent to the axiom (below left)
into the equivalent $2$-system (below right): 
  \[
\vcenter{\infer[(lq)]{G \hh \Sigma \ERGO \hh \Sigma ' \ERGO }{G \hh \Sigma, \Sigma ' \ERGO}} \qquad \qquad \vcenter{\infer{\Gamma \ERGO \Pi}{\infer*{\Gamma \ERGO \Pi}{\infer{\Sigma \ERGO }{}}
&& \infer*{\Gamma \ERGO \Pi}{\infer{\Sigma '  \ERGO}{ \Sigma ,
\Sigma ' \ERGO }}}}
  \]

Ad $(ii)$: The analiticity proof in~\cite{Negri:2014} relies on the fact
that the obtained 2-systems manipulate atomic formulae only; this is
the case for labelled 2-systems arising from frame conditions, but it
does not hold anymore when translating axiom schemata, e.g.\ the axiom
$(\varphi \IMPL \psi ) \VEL ( \psi \IMPL \varphi)$ for G\"odel logic
(cf.\ Example~\ref{ex:com}).  In this case, and for all propositional
Hilbert axioms within the class {\em GA}$_1$, analyticity for the
2-systems obtained by the method in \cite{Negri:2014} can be recovered
by $(a)$ first translating them into hypersequent rules, $(b)$ applying
the \emph{completion} procedure in \cite{Ciabattoni:2008fk} to the
latter, and $(c)$ translating them back. 
\begin{example}
\label{ex:CL}
We show the transformation of a 2-system into an analytic 2-system.
Consider the law of excluded middle $\varphi \vee \neg \varphi \in$ {\em GA}$_1$. 
The method in \cite{Negri:2014} transforms it into
the 2-system (below left), which is translated into the 
  hypersequent rule (below right)
  following the procedure in Section~\ref{sec:proc}: 
  \[
    \vcenter{ \infer{\Gamma \ERGO \Delta}{\infer*{\Gamma \ERGO
          \Delta}{\infer{\Gamma_1 \ERGO \Delta_1}{\varphi , \Gamma_1
            \ERGO \Delta_1}} & \infer*{\Gamma \ERGO
          \Delta}{\infer{\varphi , \Gamma_2 \ERGO \Delta_2}{\FAL ,
            \Gamma_2 \ERGO \Delta_2}}} } \qquad \qquad
    \vcenter{\infer{ G \mid \Gamma_1 \ERGO \Delta_1 \mid \varphi , \Gamma_2
        \ERGO \Delta_2}{G \mid \varphi , \Gamma_1 \ERGO \Delta_1 & G
        \mid \FAL , \Gamma_2 \ERGO \Delta_2}}
  \] Using the results in \cite{Ciabattoni:2008fk} we complete the latter rule and obtain the analytic hypersequent
  rule (below left), whose translation leads to the 2-system
  below right:
  \[
    \vcenter{\infer{G \mid \Gamma_{1} \ERGO \Pi_{1} \mid \Sigma , \Gamma_2 \ERGO \Pi_2
      }{G \mid \Sigma , \Gamma_{1} \ERGO \Pi_{1}}} \qquad \qquad
    \qquad \vcenter{ \infer{\Gamma \ERGO \Pi}{ \infer*{\Gamma \ERGO
          \Pi}{\infer{\Gamma_{1} \ERGO \Pi_{1}}{ \Sigma , \Gamma_{1}
            \ERGO \Pi_{1}}} & \infer*{\Gamma \ERGO \Pi}{\infer{\Sigma
            , \Gamma _2 \ERGO \Pi_2 }{}}} }
  \]
 The analiticity of $\LJ$ extended with the obtained system of rules follows from Theorem~\ref{thm:hyp-sys}.

\end{example}

\subsection{For hypersequent calculi}
\label{naturaldeduction}
We show below how to use the embeddings to reformulate
hypersequent calculi as natural deduction systems inheriting the simplicity
of Gentzen's natural deduction calculus $\NJ$ for intuitionistic logic 
(see, e.g., \cite{Prawitz}). 

Such reformulation is a step forward to prove the connection,
suggested in \cite{Avron:1991}, between intermediate logics formalised
as cut-free hypersequent systems and parallel $\lambda$-calculi.  An
attempt to reveal this connection is the natural deduction calculus
introduced in~\cite{Beckmann&Preining:2015} for G\"odel logic, one of
the main intermediate logics.  Following \cite{HyperAgata}, this
calculus deals with parallel intuitionistic derivations connected by a
symbol $\ast$; this new deduction structure mirroring the hypersequent
separator hinders however the definition of a corresponding
$\lambda$-calculus by Curry--Howard isomorphism.

\medskip

\noindent Our reformulation of hypersequent calculi as natural deduction
systems is modular, and simply obtained by adding to Gentzen's $\NJ$ \emph{higher-level rules} 
simulating hypersequent rules acting on several
components.  The transformation from hypersequent derivations into
2-systems allows us to reformulate the former without using
$\mid$-separated components and without the need of $(EC)$, which is
internalised by the bottom rules of the 2-systems. The resulting
derivations are close to natural deduction.

To present the transformation in a simple way, henceforth we consider 
hypersequent rules of the following form:
\[ \infer[(\textit{Hr})]{ G \mid \Sigma^{1}_{1} , \dots , \Sigma^{1}_{n_{1}} ,
\Gamma_{1} \ERGO \Pi_{1} \mid \dots \mid \Sigma^{k}_{1} , \dots ,
\Sigma^{k}_{n_{k}} , \Gamma_{k} \ERGO \Pi_{k}}{M_{1} &&& \dots &&& M_{k} }
\] where, for any $1 \leq i \leq k $, $M_{i}$ is a (possibly empty) set of hypersequents
of the form $G \mid \Delta^i_j, \Gamma _{i} \ERGO \Pi _{i} $, 
for some $j$, with $\Delta^i_j = \Sigma_{q}^{p}$ for some $1 \leq p \leq k$ and $1 \leq
q \leq n_{p}$, and with $\Gamma_{i} $ and $ \Pi_{i} $ non-empty.

These rules arise by applying the algorithm in \cite{Ciabattoni:2008fk} to
${\mathcal P}_{3}$ formulae (cf.\ the grammar in
Section~\ref{applications1}) of the following form\footnote{In the general case,  ${\mathcal P}_{3}$ formulae correspond to hypersequent rules  with the same form as \emph{Hr} but with more than one $\Delta^i_j$ in each premiss.}:
 \begin{small}
 \[ ((\sigma^{1}_{1} \ET \dots \ET \sigma^{1}_{n_{1}}) \IMPL
 (\delta^{1}_{1} \VEL \dots \VEL \delta^{1}_{m_{1}})) \VEL \dots \VEL
 ((\sigma^{k}_{1} \ET \dots \ET \sigma^{k}_{n_{k}}) \IMPL
 (\delta^{k}_{1} \VEL \dots \VEL \delta^{k}_{m_{k}}))\]
 \end{small}where $\sigma^i_j$ and $\delta^i_j$ are
schematic variables and $(\delta^{i}_{1} \VEL \dots \VEL
\delta^{i}_{m_{i}})$ is $\bot$, if $m_i = 0$. Henceforth we will refer
to this formula as the axiom \emph{associated} to the rule
$(\textit{Hr})$. As shown in \cite{Ciabattoni:2008fk}, $\HJ$ extended
with $(\textit{Hr})$ is {\em equivalent} to $\HJ$ extended with its
associated axiom -- that is, their derivability relations coincide.
 
\begin{example}
 ${\mathcal P}_{3}$ formulae of the above form are, e.g.,
the linearity axiom $(\varphi \IMPL \psi ) \VEL ( \psi \IMPL \varphi)$ 
(see Example \ref{ex:com}), the law of excluded middle, and
the axioms $(Bck)$ characterizing the intermediate logics with $k$ worlds, $k \geq 1$,
$\varphi_0\vee (\varphi_0\to \varphi_1)\vee \dots \vee
(\varphi_0\wedge\dots\wedge \varphi_{k-1} \to \varphi_k)$.
Also the formulae in \cite{L1982} for implicational logics and the disjunctive tautologies in \cite{DanosKrivine} are of this form;
the former paper introduces natural deduction calculi for some
intermediate logics with no normalisation procedure while the latter
interprets the disjunctive tautologies as synchronisation protocols
within the Curry--Howard correspondence framework.
\end{example}

The above hypersequent rule $(\textit{Hr})$ is transformed by the
 embedding in Section~\ref{sec:proc} into the following 2-system
\[
\infer{\Gamma \ERGO \Pi}{\infer*{\Gamma \ERGO \Pi}{\infer[\textit{Tr}_{1}]{ \Sigma^{1}_{1} , \dots , \Sigma^{1}_{n_{1}} ,
\Gamma_{1} \ERGO \Pi_{1} }{M_{1}}} & \dots & \infer*{\Gamma \ERGO \Pi}{\infer[\textit{Tr}_{k}]{\Sigma^{k}_{1} , \dots ,
\Sigma^{k}_{n_{k}} , \Gamma_{k} \ERGO \Pi_{k}}{M_{k}}}}
\]
which is translated into a natural deduction rule $\NR$ of the following form 
\begin{equation}\label{schema:nd} 
\vcenter{
\infer{\varphi}{\infer*{\varphi}{\infer{\varphi_{1}}{\deduce{\sigma^{1}_{1}}{}
      & \dots & \deduce{\sigma^{1}_{n_{1}}}{} & \infer*{\varphi_{1}}{[\delta^{1}_{1}]} &
\dots & \infer*{\varphi_{1}}{[\delta^{1}_{m_{1}}]}}} & \dots &
\infer*{\varphi}{\infer{\varphi_{k}}{\deduce{\sigma^{k}_{1}}{} & \dots &
\deduce{\sigma^{k}_{n_{k}}}{} & \infer*{\varphi_{k}}{[\delta^{k}_{1}]} & \dots &
\infer*{\varphi_{k}}{[\delta^{k}_{m_{k}}]}}}}
}
\end{equation} where $\sigma ^{i}_{j}$ corresponds to $\Sigma
^{i}_{j}$ and $\delta^{i}_{j}$ corresponds to $\FAL$ if $M_{i} =
\emptyset$ and to $\Delta ^{i}_j$ otherwise.

When an upper inference has only one or no $\delta^{i}_{j}$ we we can simplify
the notation as in the following examples.

\begin{remark}
These rules are higher-level rules \`{a} la
Schroeder-Heister~\cite{schroederh2014}, indeed they also discharge rule
applications rather than only formulae. To make this more evident, we denote
them by $*$.
\end{remark}
\begin{example}\label{ex:lin_nd}
The hypersequent rule for the linearity axiom $(\delta \IMPL  \sigma) \VEL (\sigma \IMPL \delta)$ below left (see Example~\ref{ex:sys_com} for the corresponding 2-system) is translated into the natural deduction rule below right:
 \[\vcenter{\infer{G \mid  \delta , \Gamma_{1} \ERGO \Pi_{1} \mid \sigma ,  \Gamma_{2} \ERGO \Pi_{2}  }{G \mid \sigma ,  \Gamma_{1} \ERGO \Pi_{1}  && G \mid \delta ,  \Gamma_{2} \ERGO \Pi_{2} } }  \qquad \qquad \qquad \qquad \vcenter{\infer{\varphi }{ \infer*{\varphi}{\infer{\sigma}{\delta}} && \infer*{\varphi}{\infer{\delta}{\sigma}}}}  \]
Using this rule, the linearity axiom can be derived as follows 
\[ 
\infer[^*]{(\delta \IMPL  \sigma) \VEL (\sigma \IMPL \delta)}{\infer{(\delta \IMPL  \sigma) \VEL (\sigma \IMPL \delta)}{\infer[^1]{\delta \IMPL  \sigma}{\infer[^*]{\sigma}{[\delta]^1}}} & \infer{(\delta \IMPL  \sigma) \VEL (\sigma \IMPL \delta)}{\infer[^2]{\sigma \IMPL \delta}{\infer[^*]{\delta}{[\sigma]^2}}}}
\]

The addition to $\NJ$ of the resulting natural deduction rule yields
the calculus $\NJG$ for G\"odel logic, whose normalisation and
Curry--Howard correspondence have been shown in \cite{lics2017}.
\end{example}
\begin{example}
The hypersequent rule below left for the law of excluded middle
$\sigma \VEL \neg \sigma$ (see Example~\ref{ex:CL} for the
corresponding 2-system) translates into the natural deduction rule
below right:
 \[\vcenter{\infer{G \mid \Gamma_{1} \ERGO \Pi_{1}  \mid \Sigma ,
       \Gamma_2 \ERGO \Pi_2 
}{G \mid \Sigma , \Gamma_{1} \ERGO \Pi_{1} }} \qquad \qquad \qquad
\qquad \vcenter{
\infer{\varphi}{\infer*{\varphi}{\infer{\FAL}{\sigma}} &&
\infer*{\varphi}{[\sigma]}} }
\]
We can derive the law of excluded middle using this rule as follows
\[ 
\infer[^*]{\sigma \VEL \neg \sigma}{\infer{\sigma \VEL \neg \sigma}{\infer[^1]{\neg \sigma}{\infer[^*]{\FAL}{[\sigma]^1}}} & \infer{\sigma \VEL \neg \sigma}{[\sigma]^*}}
\]
\end{example}

We show now that a hypersequent rule $(\textit{Hr})$
and the corresponding natural deduction rule $\NR$ are equivalent, i.e.\ that 
$\DER_{\HJ + \textit{Hr} } \varphi$ if and only if $\DER_{\NJ + \NR} \varphi$.
\begin{theorem}
$\HJ$ extended with any hypersequent rule (\textit{Hr})
is equivalent to $\NJ$ extended with its translated rule $\NR$.
\end{theorem}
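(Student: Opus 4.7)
The plan is to chain the embedding results already established in the paper with the standard equivalence between $\LJ$ and $\NJ$. Observe that by construction the natural deduction rule $\NR$ in schema~\eqref{schema:nd} mirrors, step for step, the shape of the 2-system $\textit{Sys}_{\textit{Hr}}$ obtained from $(\textit{Hr})$ via the translation of Section~\ref{sec:proc}: each top rule $\textit{Tr}_i$, whose instances discharge the multisets $\Sigma^i_1,\dots,\Sigma^i_{n_i}$ and branch on the alternatives in $M_i$, corresponds exactly to one upper inference of $\NR$ that discharges the assumptions $\sigma^i_1,\dots,\sigma^i_{n_i}$ and branches on hypotheses $[\delta^i_1],\dots,[\delta^i_{m_i}]$; while the bottom rule $(r_B)$ corresponds to the outermost inference of $\NR$ which discharges the top inferences above its premisses.

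For the direction $\DER_{\HJ + \textit{Hr}}\varphi \Rightarrow \DER_{\NJ + \NR}\varphi$, I would first apply Theorem~\ref{thm:hyp-sys} to obtain an $\LJ + \textit{Sys}_{\textit{Hr}}$ derivation of ${} \ERGO \varphi$. Then I would translate this sequent derivation into a natural deduction derivation in $\NJ + \NR$ by extending the standard translation from $\LJ$ to $\NJ$ (see, e.g., \cite{Prawitz}): $\LJ$ rules become $\NJ$ rules in the usual way, and each 2-system instance is translated into a single application of $\NR$. Concretely, each top rule application $\textit{Tr}_i$ with conclusion $\Sigma^i_1,\dots,\Sigma^i_{n_i},\Gamma_i\ERGO\Pi_i$ becomes an upper inference of $\NR$ whose open premisses are the translations of the derivations feeding the $\Sigma^i_j$'s (now used as undischarged assumptions) and whose subderivations for the alternatives in $M_i$ use the $\delta^i_j$ as discharged hypotheses; the bottom rule $(r_B)$ is translated into the outermost instance of $\NR$, which groups and discharges the top-rule occurrences exactly as required by Definition~\ref{def:2systems}. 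The applicability condition on 2-systems (top rules act on the same multisets of formulae) matches the eigen-formula linking of $\NR$ across its upper inferences.

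For the converse direction $\DER_{\NJ + \NR}\varphi \Rightarrow \DER_{\HJ + \textit{Hr}}\varphi$, I would reverse the previous translation. Given a derivation in $\NJ + \NR$, the standard translation from $\NJ$ into $\LJ$ turns it into a sequent derivation; each instance of $\NR$ is rewritten as an instance of the 2-system $\textit{Sys}_{\textit{Hr}}$, with the outermost inference of $\NR$ becoming the bottom rule $(r_B)$ and each upper inference becoming an application of the corresponding top rule $\textit{Tr}_i$ acting on the multisets witnessed by the discharged assumptions. The result is an $\LJ + \textit{Sys}_{\textit{Hr}}$ derivation of ${} \ERGO \varphi$, which by Theorem~\ref{thm:sys-hyp} yields the desired $\HJ + \textit{Hr}$ derivation. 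As a soundness cross-check, both sides are equivalent to $\HJ$ extended with the associated axiom (for the hypersequent side this is~\cite{Ciabattoni:2008fk}; for the natural deduction side, $\NR$ derives the axiom, as illustrated in Example~\ref{ex:lin_nd}, and conversely the axiom suffices to simulate $\NR$ in $\NJ$).

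The main obstacle will be bookkeeping in the translation of rule applications: checking that the discharging pattern of $\NR$ correctly simulates the vertical and horizontal non-locality of $\textit{Sys}_{\textit{Hr}}$, in particular that multiple top-rule occurrences sharing the same schematic multisets are captured by the same eigen-formula slots in one $\NR$ instance, and that nested applications of $\NR$ give rise to non-entangled 2-systems (so that Theorem~\ref{thm:sys-hyp} can be applied without issue after possibly invoking Lemma~\ref{lem:system_division}). Once the correspondence between top/bottom rules of the 2-system and upper/outer inferences of $\NR$ is spelled out in detail, the two inductive translations are routine.
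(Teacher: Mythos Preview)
Your approach is sound but takes a genuinely different route from the paper. The paper does not translate derivations at all: it goes through the associated axiom $r_\alpha$. For one direction it exhibits a concrete $\NJ+\NR$ derivation of $r_\alpha$; for the other it shows how to simulate an arbitrary instance of $\NR$ inside $\NJ$ using $r_\alpha$ together with $\wedge$-introductions, $\to$-elimination and nested $\vee$-eliminations. Both directions then conclude by invoking the equivalence of $(\textit{Hr})$ and $r_\alpha$ from~\cite{Ciabattoni:2008fk} and the standard $\HJ/\NJ$ equivalence. What you list at the end as a ``soundness cross-check'' is in fact the paper's entire argument.

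Your route---factoring through $\LJ+\textit{Sys}_{\textit{Hr}}$ via Theorems~\ref{thm:hyp-sys} and~\ref{thm:sys-hyp} and then extending the textbook $\LJ\leftrightarrow\NJ$ translations to match top/bottom rules with upper/outer inferences of $\NR$---would yield an explicit proof transformation rather than a mere equiderivability result, which is more informative (and closer in spirit to what one wants for a Curry--Howard reading). The price is exactly the bookkeeping you flag: the inductive translation must track undischarged \emph{rule occurrences}, not just formula assumptions, and must send multiple top-rule applications above one premiss of $(r_B)$ to multiple discharged occurrences of the same upper inference within a single $\NR$. This is workable in the restricted $(\textit{Hr})$ shape considered here (each $\Sigma^i_j,\Delta^i_j$ is a single formula), but it is not the off-the-shelf translation and would need to be spelled out; the paper's axiom-based argument sidesteps all of this in two short paragraphs.
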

\begin{proof}
We show that if $\DER_{\HJ + \textit{Hr} } \varphi$ then
$\DER_{\NJ + \NR} \varphi$.
Indeed a derivation of the
axiom $r_{\alpha}$ associated to $(\textit{Hr})$ is as follows:
  \begin{small}
  \[\infer[^*]{r_{\alpha}}{\infer{r_{\alpha}}{\infer[^{1}]{(\sigma^{1}_{1}
          \ET \dots \ET \sigma^{1}_{n_{1}}) \IMPL (\delta^{1}_{1} \VEL
          \dots \VEL \delta^{1}_{m_{1}}) }{\infer[^2 \, ^*]{\delta^{1}_{1}
            \VEL \dots \VEL
            \delta^{1}_{m_{1}}}{\infer{\sigma^{1}_{1}}{[\sigma^{1}_{1}
              \ET \dots \ET \sigma^{1}_{n_{1}}]^{1}} & \dots &
            \infer{\sigma^{1}_{n_{1}}}{[\sigma^{1}_{1} \ET \dots \ET
              \sigma^{1}_{n_{1}}]^{1}} & \infer{\delta^{1}_{1} \VEL
              \dots \VEL \delta^{1}_{m_{1}}}{[\delta^{1}_{1}]^2 } &
            \dots & \infer{\delta^{1}_{1} \VEL \dots \VEL
              \delta^{1}_{m_{1}}}{[\delta^{1}_{m_{1}}]^2}}}} & \dots &
      \infer*{r_{\alpha}}{}}\]
\end{small}
All hypotheses are derived as shown for the leftmost. The rest of the
premisses of the bottom-most inference are derived similarly using the
implications \[(\sigma^{2}_{1} \ET \dots \ET \sigma^{2}_{n_{2}})
\IMPL (\delta^{2}_{1} \VEL \dots \VEL \delta^{2}_{m_{2}}) \; , \,
\dots \,
, \;
(\sigma^{k}_{1} \ET \dots \ET \sigma^{k}_{n_{k}}) \IMPL
(\delta^{k}_{1} \VEL \dots \VEL \delta^{k}_{m_{k}})\]
The claim
follows by the equivalence between $r_\alpha$ and $(\textit{Hr})$
shown in \cite{Ciabattoni:2008fk}.

To show that if $\DER_{\NJ + \NR} \varphi$ then
$\DER_{\HJ + \textit{Hr} } \varphi$, we derive the rule $\NR$
using the rules of $\NJ$ and $r_{\alpha}$. We can then easily exploit
the equivalence between $\HJ$ and $\NJ$. Intuitively, we use
conjunction and implication elimination to simulate the upper
inferences of $\NR$
(top left part of the following
derivation). Then we nest one disjunction elimination ($\VEL E$) for
each disjunctive subformula of the axiom in order to discharge the
implications used above, discharge the formulae $\delta^{i}_{j}$, and
derive $\varphi, \varphi_1 , \dots , \varphi_k$:
  \begin{footnotesize}
    \[\infer[\VEL E ^{1}]{\varphi}{ \alpha &
        \infer*{\varphi}{\infer[\VEL E
          ^{2}]{\varphi_{1}}{\infer{\delta^{1}_{1} \VEL \dots \VEL
              \delta^{1}_{m_{1}}}{ [(\sigma^{1}_{1} \ET \dots \ET
              \sigma^{1}_{n_{1}}) \IMPL (\delta^{1}_{1} \VEL \dots \VEL
              \delta^{1}_{m_{1}}) ]^{1} & \infer{\sigma^{1}_{1} \ET
                \dots \ET \sigma^{1}_{n_{1}}}{ \sigma^{1}_{1} &
                \infer{\sigma^{1}_{2} \ET \dots \ET
                  \sigma^{1}_{n_{1}}}{\sigma^{1}_{2} &
                  \infer*{\sigma^{1}_{3} \ET \dots \ET
                    \sigma^{1}_{n_{1}}}{}}}} &
            \infer*{\varphi_{1}}{[\delta^{1}_{1}]^{2}} &
            \infer*{\varphi_{1}}{ }}} & \infer*{\varphi}{}}\]
  \end{footnotesize}
The open hypotheses here are the formulae $\sigma^{1}_{1}, \dots ,
\sigma^{1}_{n_{1}} ,$ $\dots ,$ $\sigma^{k}_{1} , \dots ,
\sigma^{k}_{n_{k}}$, which are exactly the hypotheses of
$\NR$.  The claim follows by the equivalence between
$r_\alpha$ and $(\textit{Hr})$ shown in \cite{Ciabattoni:2008fk}.
\end{proof}

\smallskip

\noindent {\bf Final Remark.}  The analiticity of the introduced
natural deduction calculi could be proved by exploiting the connection
with the corresponding cut-free hypersequent calculi.  A computational
interpretation of the former calculi calls however for a direct
normalisation procedure and an interpretation of its reduction rules
as meaningful operations in suitable $\lambda$-calculi.

The case study of G\"odel logic (see Example \ref{ex:com}) has been
detailed in \cite{lics2017}, where we proved normalisation and the
subformula property for its natural deduction calculus $\NJG$ in
Example~\ref{ex:lin_nd}.  Based on this calculus,~\cite{lics2017}
introduces indeed an extension of simply-typed $\lambda$-calculus with
a parallel operator that supports higher-order communications between
processes. The resulting functional language is strictly more
expressive than simply-typed $\lambda$-calculus.

Inspired by hypersequent cut-elimination, the key reductions to prove
the analiticity of $\NJG$ model a symmetric message exchange and
process migration mechanism handling the bindings between code
fragments and their computational environments.

\end{document}